\documentclass[a4paper,12pt]{article}

\usepackage{amsmath,amsfonts,amssymb}
\usepackage{amsthm}
\usepackage{color,multicol}
\usepackage{graphicx}

\newtheorem{lemma}{Lemma}[section]
\newtheorem{theorem}[lemma]{Theorem}
\newtheorem{corollary}[lemma]{Corollary}

\newtheorem{definition}[lemma]{Definition}

\newtheorem{remark}[lemma]{Remark}

\parindent=0pt

\begin{document}

\title{On the discrete spectrum of Schr\"odinger operators with Ahlfors regular potentials in a strip}

\author{Martin Karuhanga\footnote{Department of Mathematics, Mbarara University of Science and Technology, P.O BOX 1410, Mbarara, Uganda, E-mail: mkaruhanga@must.ac.ug, ORCID : 0000-0002-7254-9073}}

\date{}

\maketitle

\begin{abstract}
In this paper, quantitative upper estimates for the number of eigenvalues lying below the essential spectrum of
Schr\"odinger operators with potentials generated by Ahlfors regular measures in a strip subject to two different types of boundary conditions (Robin and Dirichlet respectively) are presented. The estimates are presented in terms of weighted $L^1$ norms and Orlicz norms of the potential.
\end{abstract}\noindent
{\bf Keywords}: Discrete spectrum; Schr\"odinger operators; Ahlfors regular potential; strip.\\
{\bf  Mathematics Subject Classification} (2010): 35P05, 35P15

\section{Introduction}
Let $V\in L^1_{\textrm{loc}}(\mathbb{R}^d)$. According to the Cwikel-Lieb-Rozenblum (CLR) inequality (see, e.g., \cite{BEL,Roz}), the number of negative eigenvalues of the Schr\"odinger operator $-\Delta - V, V\ge 0$ on $L^2(\mathbb{R}^d)$ with $d\ge 3$ is estimated above by $\|V\|_{L^{\frac{d}{2}}(\mathbb{R}^d)}$. In the case $d = 2$, this estimate fails, but there has been significant recent progress in obtaining estimates of the CLR-type in two-dimensions and the best known estimates have been obtained in \cite{Eugene}. Estimates for the number of negative eigenvalues of Schr\"odinger operators with potential of the form $V\mu$, where $\mu$ is a Radon measure and $V$ is an appropriate function, were obtained in \cite{Bra}, and results from \cite{Eugene} were extended to this setting in \cite{ME, KS}.  In the present paper, we obtain estimates for the number of eigenvalues below the essential spectrum of a Schr\"odinger operator with potential of the form $V\mu$ similar to those in \cite{KS} in a strip subject to boundary conditions of the Robin type. Similar estimates are also obtained when the domain of the operator is characterized by Dirichlet boundary conditions (see remark \ref{dir}). Below is a precise description of the operator studied herein.\\

Let $S := \mathbb{R}\times (0, a),\, a > 0$ be a strip, $\mu$ a $\sigma$-finite positive Radon measure on $\mathbb{R}^2$ and $V : \mathbb{R}^2 \longrightarrow \mathbb{R}$  a non-negative function integrable on bounded subsets of $S$ with respect to $\mu$.
We study the following Schr\"odinger operator
\begin{equation}\label{H}
H_{\mu} := -\Delta - V\mu\,,\;\;V\ge 0, \;\;\;\;\;\textrm{on}\;\;L^2(S),
\end{equation}
where $\Delta := \sum^2_{k = 1}\frac{\partial^2}{\partial x^2_k}$, subject to the following Robin boundary conditions\begin{equation}\label{R}
u_{x_2}(x_1, 0) + \alpha u(x_1, 0) = u_{x_2}(x_1,a) + \beta u(x_1, a) = 0,
\end{equation} where $\alpha, \beta \in \mathbb{R}$. Here $u_{x_i} (i = 1,2)$ denotes the partial derivative of $u$ with respect to $x_i$ and
note that $\mu$ does not have to be the two-dimensional Lebesgue measure. A physical motivation to this problem is closely related to the study of spectral properties of quantum waveguides (see, e.g., \cite{TE, PA, KK,Rad, AFR}).\\\\ Under certain assumptions about $V$ and $\mu$, \eqref{H} is well defined and self-adjoint on $L^2(S)$ and its essential spectrum is the interval $[\lambda_1, +\infty)$, where $\lambda_1$ is the first eigenvalue of $-\Delta = -\partial^2_{x_2}$ considered along the width of the strip with boundary conditions \eqref{R}
(see, e.g., \cite{Aize,Her,KK,Herc}). For a detailed discussion about
what $\lambda_1$ is, see e.g. \cite{Kar}.
The case where both $\alpha$ and $\beta$ are equal to zero with $\mu$ being the two-dimensional Lebesgue measure has been previously studied by A. Grigor'yan and N. Nadirashvili \cite{Grig} who obtained estimates in terms of weighted $L^1$ norms and $L^p, p> 1$ norms of $V$.
Below, we prove stronger results in our more general setting. \\\\
Let $\mathcal{H}$ be a Hilbert space and let $\mathbf{q}$ be a Hermitian form with a domain
$\mbox{Dom}\, (\mathbf{q}) \subseteq \mathcal{H}$. Set
\begin{equation}\label{hermitian}
N_- (\mathbf{q}) := \sup\left\{\dim \mathcal{L}\, | \  \mathbf{q}[u] < 0, \,
\forall u \in \mathcal{L}\setminus\{0\}\right\} ,
\end{equation}
where $\mathcal{L}$ denotes a linear subspace of $\mbox{Dom}\, (\mathbf{q})$.  The number $N_- (\mathbf{q})$ is called the Morse index of $\mathbf{q}$ in $\mbox{Dom}\,(\mathbf{q})$. If $\mathbf{q}$
is the quadratic form of a self-adjoint operator $A$ with no essential spectrum in $(-\infty, 0)$, then
$N_- (\mathbf{q})$ is the number of negative eigenvalues of $A$ repeated according to their
multiplicity (see, e.g., \cite[S1.3]{BerShu} or \cite[Theorem 10.2.3]{BirSol}).\\
Estimating the number of eigenvalues of \eqref{H} below its essential spectrum is equivalent to estimating the number of negative eigenvalues of the operator
\begin{equation}\label{H*}
H_{\lambda_1, \mu} = -\Delta - \lambda_1 -V\mu \mbox{ on } L^2(S)
\end{equation}
subject to boundary conditions in \eqref{R}. Now, defining \eqref{H*} via its quadratic form we have
 \begin{eqnarray}\label{S}
\mathcal{E}_{\lambda_1, \mu, S}[u] &:=& \int_S |\nabla u(x)|^2\,dx - \lambda_1\int_{S}|u(x)|^2\,dx - \alpha\int_{\mathbb{R}}|u(x_1, 0)|^2\,dx_1\nonumber \\&+& \beta\int_{\mathbb{R}}|u(x_1, a)|^2\,dx_1 - \int_{\overline{S}} V(x)|u(x)|^2\,d\mu(x),\\
\textrm{Dom}\left( \mathcal{E}_{\lambda_1, \mu, S}\right) &=& \left\{u\in W^1_2(S)\cap L^2(\overline{S}, Vd\mu\right\}\nonumber.
\end{eqnarray}
Note that we take the closure $\overline{S}$ of the open strip $S$ in the terms involving $\mu$ as this measure might charge
subsets of the horizontal lines $x_2 = 0$ and $x_2 = a$.\\
We denote by $N_-\left(\mathcal{E}_{\lambda_1, \mu, S} \right)$ the number of negative eigenvalues of \eqref{H*} counting multiplicities.\\\\

Let $ S_n := (n, n + 1)\times (0, a),\,\,n \in\mathbb{Z}.$ Then it follows from the variational principle \cite[Lemma 1.6.2]{ME} (see also \cite[Ch.6, $\S$2.1, Theorem 4]{Cou} for the case when $\mu$ is absolutely continuous with respect to the Lebesgue measure) that
\begin{equation}\label{variation}
N_-\left(\mathcal{E}_{\lambda_1, \mu, S} \right) \le \sum_{n\in\mathbb{Z}}N_-\left(\mathcal{E}_{\lambda_1, \mu, S_n} \right),
\end{equation}
where $N_-\left(\mathcal{E}_{\lambda_1, \mu, S_n} \right)$ are the restrictions of the form $\mathcal{E}_{\lambda_1, \mu, S} $ to $\overline{S_n}$.
Let $u(x) = u_1(x_2)$, where $u_1$ is an eigenfunction of $-\Delta = -\partial^2_{x_2}$ considered along the width of the strip with boundary conditions \eqref{R} corresponding to the first eigenvalue $\lambda_1$. Then it is easy to see that
$$
\mathcal{E}_{\lambda_1, \mu, S_n}[u] = - \int_{\overline{S_n}} V(x)|u_1(x_2)|^2\,d\mu(x)
$$
and since $V \ge 0$, the right-hand side is strictly negative unless $$\mu\Big(\mbox{supp} \big(V|u_1|^2\big)\cap\overline{S_n}\Big) = 0.$$
So, one usually has $N_-\left(\mathcal{E}_{\lambda_1, \mu,S_n}\right) \ge 1$ and thus the right-hand side of \eqref{variation} diverges. To avoid this, we shall split the problem  into two problems. The first will be defined by the restriction of the form to the subspace of functions
obtained by multiplying $u_1(x_2)$ by functions depending only on $x_1$, and is thus reduced to a well studied one-dimensional Schr\"odinger operator. The second problem will be defined by a class of functions orthogonal to $u_1$ in the $L^2((0, a))$ inner product.

\section{Notation}\label{notation}
In order to state the estimate for $N_-\left(\mathcal{E}_{\lambda_1, \mu, S} \right)$, we need some notation from the theory of Orlicz spaces (see, e.g., \cite{Ad, KR, RR}). Let $\Phi$ and $\Psi$ be mutually complementary $N$-functions, and let $L_\Phi(\Omega,\mu)$,
$L_\Psi(\Omega, \mu)$ be the corresponding Orlicz spaces.  We will use the following
norms on $L_\Psi(\Omega, \mu)$
\begin{equation}\label{Orlicz}
\|f\|_{\Psi, \mu} = \|f\|_{\Psi, \Omega, \mu} = \sup\left\{\left|\int_\Omega f g d\mu\right| : \
\int_\Omega \Phi(|g|) d\mu \le 1\right\}
\end{equation}
and
\begin{equation}\label{Luxemburg}
\|f\|_{(\Psi, \mu)} = \|f\|_{(\Psi, \Omega, \mu)} = \inf\left\{\kappa > 0 : \
\int_\Omega \Psi\left(\frac{|f|}{\kappa}\right) d\mu \le 1\right\} .
\end{equation}
These two norms are equivalent
\begin{equation}\label{Luxemburgequiv}
\|f\|_{(\Psi, \mu)} \le \|f\|_{\Psi, \mu} \le 2 \|f\|_{(\Psi, \mu)}\, , \ \ \ \forall f \in L_\Psi(\Omega),
\end{equation}(see \cite{Ad}).\\
Note that
\begin{equation}\label{LuxNormImpl}
\int_\Omega \Psi\left(\frac{|f|}{\kappa_0}\right) d\mu \le C_0, \ \ C_0 \ge 1  \ \ \Longrightarrow \ \
\|f\|_{(\Psi)} \le C_0 \kappa_0 .
\end{equation}
Indeed, since $\Psi$ is  convex and increasing on
$[0, +\infty)$, and $\Psi(0) = 0$, we get for any $\kappa \ge C_0 \kappa_0$,
\begin{equation}\label{LuxProof}
\int_{\Omega} \Psi\left(\frac{|f|}{\kappa}\right) d\mu \le
\int_{\Omega} \Psi\left(\frac{|f|}{C_0 \kappa_0}\right) d\mu \le
\frac{1}{C_0} \int_{\Omega} \Psi\left(\frac{|f|}{\kappa_0}\right) d\mu \le 1
\end{equation}
(see \cite{Eugene}).
It follows from \eqref{LuxNormImpl} with $\kappa_0 = 1$ that
\begin{equation}\label{LuxNormPre}
\|f\|_{(\Psi, \mu)} \le \max\left\{1, \int_{\Omega} \Psi(|f|) d\mu\right\} .
\end{equation}
We will also need the following equivalent norm on $L_{\Psi}(\Omega)$ with $\mu(\Omega) < \infty$, which was introduced in \cite{Sol}:
\begin{equation}\label{new}
\|f\|^{(av)}_{\Psi, \Omega} := \sup\left\{\left|\int_\Omega f g\, d\mu\right| : \
\int_\Omega \Phi(|g|) d\mu \le \mu(\Omega)\right\}.
\end{equation}

We will use the following pair of mutually complementary $N$-functions
$$
\mathcal{A}(s) = e^{|s|} - 1 - |s| , \ \ \ \mathcal{B}(s) = (1 + |s|) \ln(1 + |s|) - |s| , \ \ \ s \in \mathbb{R} .
$$
\begin{definition}
 Let $\mu$ be a positive Radon measure on $\mathbb{R}^2$. We say the measure $\mu$ is Ahlfors regular of dimension $d > 0$ if there exist positive constants $c_0$ and $c_1$ such that
\begin{equation}\label{Ahlfors}
c_0r^{d} \le \mu(B(x, r)) \le c_1r^{d}\;
\end{equation}for all $0< r \le$ diam(supp$\,\mu$) and all $x\in \textrm{supp}\,\mu$, where $B(x, r)$ is the ball of radius $r$ centred at $x$, and the constants $c_0$ and $c_1$ are independent of the balls.
\end{definition}

\begin{definition}{\rm (Local Ahlfors regularity)}
We say that a measure $\mu$ is locally Ahlfors regular on a bounded set $G \subset \mathbb{R}^2$ if for every $R < \infty$
there exist $d > 0$ and  positive constants
$c_0(R)$ and $c_1(R)$ such that
\begin{equation}\label{AhlforsR}
c_0(R)\, r^{d} \le \mu(B(x, r)) \le c_1(R)\, r^{d}
\end{equation}
for all $0< r \le R$ and all $x\in \textrm{supp}\,\mu \cap \overline{G}$. We say that $\mu$ is locally Ahlfors regular on the strip
$S$ if \eqref{AhlforsR} holds for all $0< r \le R$ and all $x\in \textrm{supp}\,\mu \cap \overline{S}$,
and there exist constants $c_2, c_3 > 0$ such that
\begin{equation}\label{locAhlfors}
c_2\mu\left(\overline{S_{n \pm 1}}\right) \le \mu\left(\overline{S_n}\right)\le c_3\mu\left(\overline{S_{n \pm 1}}\right),\,\,\,\,\forall  n\in\mathbb{Z}\,.
\end{equation}
\end{definition}

Thus for each $n\in\mathbb{Z}$,
\begin{equation}\label{locAhlfors*}
c^k_2\mu\left(\overline{S_{n \pm k}}\right) \le \mu\left(\overline{S_n}\right)\le c^k_3\mu\left(\overline{S_{n \pm k}}\right),\,\,\,\,\forall  k\in\mathbb{N}\,.
\end{equation}

From now onwards, it will be assumed that $\mu$ is a $\sigma$-finite positive Radon measure that is locally Ahlfors regular on $S$.
\section{Statement of the main result}\label{S3}
 Let
\begin{eqnarray*}
I_n := [2^{n - 1}, 2^n], \ n > 0 , \ \ \ I_0 := [-1, 1] , \ \ \
I_n := [-2^{|n|}, -2^{|n| - 1}], \ n < 0 ,
\end{eqnarray*}
\begin{eqnarray*}
\mathcal{F}_n &:=& \int_{I_n}\int_0^a|x_1|V(x)|u_1(x_2)|^2\,d\mu(x)\,\,\,\, n\neq 0\,,\\ \mathcal{F}_0 &:=& \int_{I_0}\int_0^aV(x)|u_1(x_2)|^2\,d\mu(x)\,,\\
M_n &:=& \|V\|_{\mathcal{B}, \overline{S_n},\mu}\,,
\end{eqnarray*}
where $u_1$ is a normalized eigenfunction of $-\Delta = -\partial^2_{x_2}$ on $(0, a)$ with boundary conditions \eqref{R} corresponding to the first eigenvalue $\lambda_1$ (here the normalization is with respect to the Lebesgue measure).
\begin{theorem}\label{rbthm}
Let $\mu$ be a $\sigma$-finite positive Radon measure on $\mathbb{R}^2$ that is locally Ahlfors regular on $S$ and $V\in L_{\mathcal{B}}(S_n, \mu)$ for every $n\in\mathbb{Z}$. Then there exist constants $C, c >0$ such that
\begin{equation}\label{rbtheqn}
N_-\left(\mathcal{E}_{\lambda_1, \mu, S} \right) \le 1 + C\left(\underset{\{\mathcal{F}_n > c,\,n\in\mathbb{Z}\}}\sum \sqrt{\mathcal{F}_n} + \underset{\{M_n > c,\,n\in\mathbb{Z}\}}\sum M_n \right).
\end{equation}
\end{theorem}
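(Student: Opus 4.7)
The plan is to combine the variational inequality \eqref{variation} with a fibrewise orthogonal decomposition suggested in the introduction: for each $n\in\mathbb{Z}$ and $u\in W^1_2(S_n)\cap L^2(\overline{S_n},Vd\mu)$, write
\[
u(x_1,x_2)=f(x_1)u_1(x_2)+g(x_1,x_2),\qquad \int_0^a g(x_1,x_2)\overline{u_1(x_2)}\,dx_2=0\ \text{for a.e. }x_1.
\]
Since $u_1$ is the first eigenfunction for the transverse problem with boundary conditions \eqref{R}, this decomposition is orthogonal both in $L^2((0,a))$ and with respect to the quadratic form $\int_0^a|u'|^2\,dx_2-\lambda_1\int_0^a|u|^2\,dx_2-\alpha|u(0)|^2+\beta|u(a)|^2$. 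Consequently $\mathcal{E}_{\lambda_1,\mu,S_n}[u]=\mathcal{E}^{(1)}_{\lambda_1,\mu,S_n}[f]+\mathcal{E}^{(2)}_{\lambda_1,\mu,S_n}[g]+\text{cross term involving }V\mu$. The cross term would destroy the splitting, so I would first pass to the upper bound $2|fu_1||g|\le |fu_1|^2+|g|^2$ inside the potential term, doubling $V$, and then apply the usual additive variational bound $N_-(\mathcal{E}_{\lambda_1,\mu,S_n})\le N_-(\mathcal{E}^{(1)}_n)+N_-(\mathcal{E}^{(2)}_n)$ with $V$ replaced by $2V$; the factor $2$ is absorbed into the constant $C$.

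Next I would estimate the two pieces separately. For the tensor-product part, the normalization $\int_0^a|u_1|^2dx_2=1$ and the choice of $\lambda_1$ kill the $x_2$-kinetic, the boundary and the mass terms, reducing $\mathcal{E}^{(1)}_{\lambda_1,\mu,S_n}[f]$ to the one-dimensional form
\[
\int_{n}^{n+1}|f'(x_1)|^2dx_1-\int_{\overline{S_n}}V(x)|f(x_1)|^2|u_1(x_2)|^2d\mu(x)
\]
on $L^2((n,n+1))$ with Neumann boundary conditions coming from the quadratic form (after summing over $n$ this gives the whole line $\mathbb{R}$ with Neumann gluing, which by standard bracketing can only increase $N_-$). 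This is a one-dimensional Schrödinger problem with Radon measure potential on $\mathbb{R}$; applying the dyadic Birman--Solomyak--Molchanov estimate on $\mathbb{R}$, one obtains the bound $1+C\sum_{\{\mathcal{F}_n>c\}}\sqrt{\mathcal{F}_n}$, where the extra $1$ is the usual zero-eigenvalue correction of Bargmann type. This step is standard but needs care to check that the effective 1D potential really is $\int_0^a V(x_1,\cdot)|u_1|^2d\mu_{x_1}$ summed dyadically to $\mathcal{F}_n$.

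For the orthogonal part I would exploit the spectral gap: since $g(x_1,\cdot)\perp u_1$ in $L^2((0,a))$ for a.e. $x_1$, Parseval in the eigenbasis of $-\partial_{x_2}^2$ with conditions \eqref{R} gives the Poincar\'e-type inequality
\[
\int_0^a|\partial_{x_2}g|^2dx_2-\alpha|g(x_1,0)|^2+\beta|g(x_1,a)|^2-\lambda_1\int_0^a|g|^2dx_2\ \ge\ (\lambda_2-\lambda_1)\int_0^a|g|^2dx_2,
\]
with $\lambda_2>\lambda_1$. Integrating in $x_1$ shows that on each $\overline{S_n}$ the form $\mathcal{E}^{(2)}_{\lambda_1,\mu,S_n}$ is bounded below by a strictly positive elliptic form minus $\int_{\overline{S_n}}V|g|^2d\mu$ on the closed subspace orthogonal to $u_1$. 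On this subspace one has a genuine (coercive) two-dimensional Schr\"odinger problem on a bounded square with a locally Ahlfors regular measure potential, for which the counting estimate
\[
N_-(\mathcal{E}^{(2)}_n)\ \le\ \mathrm{const}+C\,\|V\|_{\mathcal{B},\overline{S_n},\mu}
\]
whenever $M_n>c$ follows from the Orlicz-norm version of the 2D Birman--Laptev--Solomyak estimate proved for Ahlfors regular measures in \cite{ME,KS} (Grigor'yan--Nadirashvili-type). Summing over $n$ and combining with the one-dimensional contribution gives \eqref{rbtheqn}.

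The main obstacle is step three: verifying that the Orlicz-norm 2D bound of \cite{KS} applies uniformly across the unit squares $\overline{S_n}$ with constants independent of $n$. This requires the scale-invariance provided by the local Ahlfors regularity \eqref{AhlforsR}--\eqref{locAhlfors} (so the constants in the Sobolev--Orlicz embedding used in the Birman--Schwinger argument do not degenerate as $|n|\to\infty$), together with a uniform lower bound for the spectral gap $\lambda_2-\lambda_1$ which is a one-dimensional computation independent of $n$. Once these uniformity statements are in hand, the rest is assembly.
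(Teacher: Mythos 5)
Your overall strategy---projecting fibrewise onto the span of $u_1$, doubling the potential to remove the cross term, reducing the $u_1$-component to a one-dimensional Schr\"odinger operator with a measure potential, and handling the orthogonal component via the spectral gap plus an Orlicz-norm covering lemma on unit squares---is exactly the strategy of the paper. The doubling trick appears as the passage to $\mathcal{E}_{1,2\mu}$ and $\mathcal{E}_{2,2\mu}$ in \eqref{radstrip}; the spectral-gap coercivity is \eqref{Rbeqn1} together with Lemmas \ref{rblemma0} and \ref{cor1} (you gloss over the need to absorb the boundary terms $-\alpha|u(x_1,0)|^2+\beta|u(x_1,a)|^2$, which the paper does via the trace theorem and Ehrling's lemma, but that is a technical rather than conceptual point); and the two-dimensional Orlicz estimate is Lemma \ref{measlemma4}. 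Your worry about uniformity of the constants in $n$ is resolved there by the translation invariance of $C(G)$ and by \eqref{locAhlfors}, which controls $\mu(S^*_n)/\mu\left(\overline{S_n}\right)$ uniformly.

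There is, however, one genuine gap, and it is precisely the pitfall flagged in the introduction. You apply the cell decomposition \eqref{variation} \emph{first} and only then split each $\mathcal{E}_{\lambda_1,\mu,S_n}$ into $\mathcal{E}^{(1)}_n+\mathcal{E}^{(2)}_n$. For the tensor-product part this is fatal: $\mathcal{E}^{(1)}_n$ is a Neumann problem for $-f''-2\nu$ on $(n,n+1)$, and testing with $f\equiv 1$ gives $\mathcal{E}^{(1)}_n[1]=-2\nu((n,n+1))<0$ whenever $\nu$ charges the cell, so $N_-(\mathcal{E}^{(1)}_n)\ge 1$ for every such $n$ and $\sum_n N_-(\mathcal{E}^{(1)}_n)$ diverges. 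Your parenthetical appeal to Neumann bracketing goes in the wrong direction: bracketing gives $N_-(\mbox{whole line})\le\sum_n N_-(\mbox{Neumann cell }n)$, i.e.\ the whole-line count is a \emph{lower} bound for the quantity you need to control, not an upper bound. The paper avoids this by performing the orthogonal decomposition globally on $S$ (Lemmas \ref{rblemma1}--\ref{rblemma2*}), so that the $\mathcal{H}_1$-part is a single operator $-d^2/dx_1^2-2\nu$ on $L^2(\mathbb{R})$ to which the one-dimensional estimate \eqref{radest} applies directly; only the $\mathcal{H}_2$-part is then bracketed into cells, and there the spectral gap guarantees $N_-\left(\mathcal{E}_{2,2\mu,S_n}\right)=0$ whenever $M_n\le 1/C_{4}$, so the sum over $n$ is effectively restricted to $\{M_n>c\}$. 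If you reverse the order of your first two steps, your argument becomes the paper's.
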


\section{Auxiliary results}
We start with a result that was obtained in \cite{KS} (see also \cite[Lemma 3.1.1]{ME}). For the reader's convenience, we reproduce the
proof here.
\begin{lemma}\label{direction}
Let $\mu$ be a $\sigma$-finite Radon measure on $\mathbb{R}^2$ such that $\mu(\{x\})= 0$ for all $x\in\mathbb{R}^2$. Let
\begin{equation}\label{sim}
\Sigma := \left\{\theta \in [0, \pi)\;:\;\exists\;l_{\theta}\mbox{ such that }\mu(l_{\theta}) > 0\right\},
\end{equation}
where $l_{\theta}$ is a line in $\mathbb{R}^2$ in the direction of the vector $(\cos\theta, \sin\theta)$. Then $\Sigma$ is at most countable.
\end{lemma}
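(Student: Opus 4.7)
The plan is to exploit the elementary fact that for any two distinct angles $\theta \neq \theta'$ in $[0, \pi)$, a line in direction $(\cos\theta, \sin\theta)$ meets a line in direction $(\cos\theta', \sin\theta')$ in at most one point, and singletons have $\mu$-measure zero by hypothesis. Thus the chosen lines $\{l_\theta\}_{\theta \in \Sigma}$ form an almost-disjoint family in the sense that pairwise intersections are $\mu$-null, and this should be in tension with $\sigma$-finiteness if $\Sigma$ were uncountable.

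First I would select, for each $\theta \in \Sigma$, a line $l_\theta$ with $\mu(l_\theta) > 0$. Using $\sigma$-finiteness, write $\mathbb{R}^2 = \bigcup_{j \in \mathbb{N}} A_j$ with $\mu(A_j) < \infty$. Since $\mu(l_\theta) > 0$, for each $\theta \in \Sigma$ there exists $j = j(\theta)$ with $\mu(l_\theta \cap A_j) > 0$, and hence some $n = n(\theta) \in \mathbb{N}$ with $\mu(l_\theta \cap A_j) > 1/n$. This gives the decomposition
\begin{equation*}
\Sigma \;=\; \bigcup_{j, n \in \mathbb{N}} \Sigma_{j,n}, \qquad \Sigma_{j,n} := \bigl\{\theta \in \Sigma : \mu(l_\theta \cap A_j) > 1/n \bigr\}.
\end{equation*}
Since a countable union of countable (indeed finite) sets is countable, it will suffice to show that each $\Sigma_{j,n}$ is finite.

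To this end, I would fix $j, n$ and take any distinct $\theta_1, \ldots, \theta_N \in \Sigma_{j,n}$, setting $E_i := l_{\theta_i} \cap A_j$. For $i \neq k$, $E_i \cap E_k$ is contained in the single point $l_{\theta_i} \cap l_{\theta_k}$, so $\mu(E_i \cap E_k) = 0$; a disjointification argument (replacing each $E_i$ by $E_i \setminus \bigcup_{k<i} E_k$, which changes its measure by zero) then yields
\begin{equation*}
\mu\Bigl(\bigcup_{i=1}^N E_i\Bigr) \;=\; \sum_{i=1}^N \mu(E_i) \;>\; \frac{N}{n}.
\end{equation*}
Since $\bigcup_i E_i \subseteq A_j$, this forces $N < n\, \mu(A_j) < \infty$, so $|\Sigma_{j,n}| \le n\, \mu(A_j)$ and $\Sigma_{j,n}$ is finite, completing the proof.

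The argument is essentially a pigeonhole based on almost-disjointness; the only mildly subtle point is justifying the additivity of $\mu$ on the family $\{E_i\}$ despite them being genuinely (not literally) disjoint, which is handled by the disjointification step above since pairwise intersections are $\mu$-null. No deeper obstacle is expected.
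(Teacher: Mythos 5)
Your proof is correct and follows essentially the same route as the paper: both reduce to a family of thresholded subsets of $\Sigma$ with positive mass inside a fixed finite-measure set, and both use the fact that lines in distinct directions meet in at most one $\mu$-null point to disjointify and invoke countable additivity. The only (cosmetic) differences are that you slice by the $\sigma$-finite decomposition $\{A_j\}$ where the paper uses the balls $B(0,N)$ (on which a Radon measure is automatically finite), and you give a direct cardinality bound on each $\Sigma_{j,n}$ where the paper argues by contradiction.
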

\begin{proof}
Let $$
\Sigma_N := \left\{\theta \in [0, \pi)\;:\;\exists\; \l_{\theta} \mbox{ such that } \mu(l_{\theta}\cap B(0, N)) > 0\right\},
$$
where $B(0, N)$ is the ball of radius $N\in\mathbb{N}$ centred at $0$. Then
$$
\Sigma = \underset{N\in\mathbb{N}}\cup \Sigma_N.
$$It is now enough to show that $\Sigma_N$ is at most countable for $\forall N\in\mathbb{N}$. Suppose that $\Sigma_N$ is uncountable. Then there exists a $\delta > 0$ such that
$$
\Sigma_{N,\delta} := \left\{\theta \in [0, \pi)\;:\;\exists\; \l_{\theta} \mbox{ such that }\mu(l_{\theta}\cap B(0, N)) > \delta\right\}
$$ is infinite.
Otherwise, $\Sigma_N = \underset{n\in\mathbb{N}}\cup \Sigma_{N, \frac{1}{n}}$ would have been finite or countable. Now take $\theta_1,..., \theta_k,... \in\Sigma_{N, \delta}$. Then
$$
\mu\left(l_{\theta_k}\cap B(0, N)\right) > \delta,\;\;\;\forall k\in\mathbb{N}\,.
$$
Since $l_{\theta_j}\cap l_{\theta_k} ,\;j\neq k$ contains at most one point, then $$\mu\left(\underset{j \neq k}\cup (l_{\theta_j}\cap l_{\theta_k})\right) = 0.$$
Let
$$
\tilde{l}_{\theta_k}:= l_{\theta_k}\backslash\underset{j \neq k}\cup (l_{\theta_j}\cap l_{\theta_k})\,.
$$Then $\tilde{l}_{\theta_j}\cap\tilde{l}_{\theta_k} = \emptyset,\;j \neq k$ and $\tilde{l}_{\theta_k}\cap B(0, N) \subset B(0, N)$.
So
$$
\mu\left(\underset{k\in\mathbb{N}}\cup (\tilde{l}_{\theta_k}\cap B(0, N))\right) \le \mu\left(B(0, N)\right) < \infty\,.
$$ But
$$
\mu\left(\tilde{l}_{\theta_k}\cap B(0, N)\right) = \mu\left(l_{\theta_k}\cap B(0, N)\right) \ge \delta
$$
which implies
$$
\underset{k\in\mathbb{N}}\sum \mu\left( \tilde{l}_{\theta_k}\cap B(0, N)\right) \geq \underset{k\in\mathbb{N}}\sum\delta = \infty\,.
$$
This contradiction means that $\Sigma_N$ is at most countable for each $N\in\mathbb{N}$. Hence $\Sigma$ is at most countable.
\end{proof}
\begin{corollary}\label{cor-direct}
There exists $\theta_0 \in [0, \pi)$ such that $\theta_0 \notin \Sigma$ and $\theta_0 + \frac{\pi}{2} \notin \Sigma$.
\end{corollary}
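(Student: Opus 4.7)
The plan is a one-line measure-theoretic pigeonhole argument sitting on top of Lemma \ref{direction}. I would work on the half-open interval $[0,\pi/2)$ and look for $\theta_0$ there, so that $\theta_0+\pi/2 \in [\pi/2,\pi)\subset[0,\pi)$ automatically lies in the range where $\Sigma$ was defined. (If one prefers to allow $\theta_0\in[0,\pi)$ arbitrarily, one identifies angles modulo $\pi$, which is natural since $l_\theta$ and $l_{\theta+\pi}$ are the same line; either way the argument is the same.)

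Define the \emph{bad} set
\[
\Sigma^{\sharp} \;:=\; \bigl(\Sigma\cap[0,\pi/2)\bigr)\;\cup\;\bigl\{\theta-\tfrac{\pi}{2}\;:\;\theta\in\Sigma\cap[\pi/2,\pi)\bigr\}\;\subset\;[0,\pi/2).
\]
A value $\theta_0\in[0,\pi/2)$ fails the conclusion of the corollary precisely when $\theta_0\in\Sigma^{\sharp}$. Both pieces in the definition of $\Sigma^{\sharp}$ are subsets of (a translate of) $\Sigma$, which by Lemma \ref{direction} is at most countable; hence $\Sigma^{\sharp}$ is at most countable. Since $[0,\pi/2)$ is uncountable, the complement $[0,\pi/2)\setminus\Sigma^{\sharp}$ is nonempty, and any $\theta_0$ in this complement satisfies both $\theta_0\notin\Sigma$ and $\theta_0+\pi/2\notin\Sigma$.

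There is essentially no obstacle here: the only thing to double-check is the bookkeeping of the angle range so that $\theta_0+\pi/2$ lands inside $[0,\pi)$ where $\Sigma$ is defined, and the invocation that a countable union of countable sets remains countable. The corollary is really just a convenient rephrasing of Lemma \ref{direction} that will later let one choose a coordinate system (rotating by $\theta_0$) in which neither coordinate direction is charged by $\mu$ on any line.
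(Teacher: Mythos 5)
Your argument is correct and is essentially the paper's own proof: both observe that $\Sigma$ and its translate by $-\pi/2$ are at most countable, so their union cannot exhaust an uncountable interval of angles. Your version is slightly more careful about keeping $\theta_0$ in $[0,\pi/2)$ so that $\theta_0+\pi/2$ stays in the range $[0,\pi)$ where $\Sigma$ is defined, a bookkeeping point the paper glosses over.
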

\begin{proof}
The set
$$
\Sigma - \frac{\pi}{2} := \left\{ \theta - \frac{\pi}{2} \;: \theta\in \Sigma\right\}
$$ is at most countable. This implies that there exists a $\theta_0\notin \Sigma \cup (\Sigma - \frac{\pi}{2})$. Thus $\theta_0 + \frac{\pi}{2}\notin \Sigma$.
\end{proof}

Let $G\subset\mathbb{R}^2$ be a bounded set with Lipschitz boundary such that $0 < \mu(\overline{G}) < \infty$. Let $G_*$ be the smallest square containing $G$ with sides chosen in the directions $\theta_0$ and $\theta_0 + \frac{\pi}{2}$ from Corollary \ref{cor-direct}, and let
$G^*$ be the closed square with the same centre as $G_*$ and  sides in the same direction but of length $3$ times that of $G_*$. Let
$$
\kappa_0 (G) := \frac{\mu(G^*)}{\mu\left(\overline{G}\right)}\,.
$$

There exists a bounded linear operator $$T: W^1_2(G) \longrightarrow W^1_2(\mathbb{R}^2)$$  which satisfies
$$
Tu|_G = u, \;\;\;\forall u \in W^1_2(G)
$$
(see, e.g.,  \cite[Ch.VI, Theorem 5]{Stein}).
We will use the following notation:
$$
u_E := \frac{1}{|E|}\int_E u(x)\,dx,
$$ where $E \subseteq\mathbb{R}^2$ is a set of a finite two dimensional Lebesgue measure $|E|$.

The following result is similar to \cite[Lemma 3.2.13]{ME} and follows directly from the proof of the latter.
\begin{lemma}\label{measlemma4}
Let $G$ be as above and $\mu$ be a $\sigma$-finite positive  Radon measure on $\mathbb{R}^2$ that is locally Ahlfors regular on $G$. Choose and fix a direction satisfying Corollary \ref{cor-direct}. Further, for all $x\in\overline{G}$ and for all $r > 0$, let $\Delta_x(r)$ be a square with edges of length $r$ in the chosen direction centred at $x\in \textrm{supp}\,\mu \cap \overline{G}$. Then for any
$V\in L_{\mathcal{B}}\left(\overline{G}, \mu\right), \; V\geq 0$ and any $m\in \mathbb{N}$ there exists a finite cover of $\textrm{supp}\,\mu\cap \overline{G}$  by squares $Q_{x_k}(r_{x_k}), r_{x_k} > 0, k = 1, 2, ..., m_0$,  such that $m_0\le m$  and
\begin{equation}\label{maz9}
\int_{\overline{G}} V(x)|u(x)|^2d\mu(x) \le C(G) C(d)\frac{c_1(R)}{c_0(R)}\kappa_0^2\,
m^{-1}\|V\|^{(av)}_{\mathcal{B}, \overline{G}, \mu}\|u\|^2_{W^1_2(G)}
\end{equation}for all $u\in W^1_2(G)\cap C(\overline{G})$ with $(Tu)_{Q_{x_k}(r_{x_k})} = 0, k = 1,..., m_0$, where the constant $C(G)$
depends only on $G$ and is invariant under parallel translations of $G$, $C(d)$ depends only on $d$ in \eqref{AhlforsR},
and $R$ is the diameter of $G^*$. If $m = 1$, one can take $m_0 = 0$.
\end{lemma}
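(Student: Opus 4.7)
The plan is to adapt the argument of \cite[Lemma 3.2.13]{ME} to the locally Ahlfors regular setting. The strategy has three phases: a stopping-time subdivision of $G^*$ producing at most $m$ squares on which the $\mathcal{B}$-Orlicz average of $V$ is controlled by $m^{-1}\|V\|^{(av)}_{\mathcal{B},\overline{G},\mu}$; a local estimate on each such square combining H\"older duality for the pair $(\mathcal{A},\mathcal{B})$ with a Trudinger--Moser type exponential embedding and the Poincar\'e inequality for zero-mean functions; and finally summation together with the extension operator $T$ to return to $W^1_2(G)$.

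For the first phase I would perform a dyadic stopping-time decomposition of $G^*$ with edges aligned to the directions $\theta_0,\theta_0+\pi/2$ from Corollary \ref{cor-direct}: starting from $G^*$ itself, recursively split any current square $Q$ into its four congruent sub-squares as long as the modular $\int_{Q\cap\overline{G}}\mathcal{B}(|V|)\,d\mu$ exceeds a suitable $m$-dependent threshold, and stop otherwise. Since the total modular is finite, the procedure terminates in a finite family $\{Q_{x_k}(r_{x_k})\}_{k=1}^{m_0}$ of essentially disjoint squares whose union covers $\textrm{supp}\,\mu\cap\overline{G}$; a standard counting argument together with \eqref{AhlforsR}, which relates $\mu(Q)$ to the sum of $\mu$ over its four children with loss controlled by $c_1(R)/c_0(R)$, forces $m_0\le m$ and yields the per-square bound $\|V\|_{\mathcal{B},Q_{x_k}(r_{x_k}),\mu}\le C\,m^{-1}\|V\|^{(av)}_{\mathcal{B},\overline{G},\mu}$. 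The centres may be taken in $\textrm{supp}\,\mu\cap\overline{G}$, and the case $m=1$ admits $m_0=0$ since no subdivision is needed.

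For the second phase, on each stopping square $Q_k$, H\"older's inequality for the complementary pair $(\mathcal{A},\mathcal{B})$ gives
\[
\int_{Q_k\cap\overline{G}} V|u|^2\,d\mu\le 2\,\|V\|_{\mathcal{B},Q_k,\mu}\,\bigl\||Tu|^2\bigr\|_{(\mathcal{A},Q_k,\mu)}.
\]
The hypothesis $(Tu)_{Q_k}=0$ activates the Poincar\'e inequality on $Q_k$, and the two-dimensional Trudinger--Moser embedding $W^1_2(\mathbb{R}^2)\hookrightarrow L_{\mathcal{A}}$ bounds $\||Tu|^2\|_{(\mathcal{A},Q_k,\mu)}$ by the Dirichlet energy $\|\nabla Tu\|^2_{L^2(Q_k^{**})}$ on a fixed dilate $Q_k^{**}$, with the ratio $c_1(R)/c_0(R)$ emerging when converting between $\mu$- and Lebesgue-averages via \eqref{AhlforsR}. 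Summing over $k$ uses that the dilates $\{Q_k^{**}\}$ cover $G^*$ with bounded multiplicity, so the gradient contributions collapse into $\|\nabla Tu\|^2_{L^2(G^*)}$, which the extension bound from \cite[Ch.VI, Theorem 5]{Stein} converts to $C(G)\|u\|^2_{W^1_2(G)}$; the factor $\kappa_0^2$ arises when rewriting the per-square $\mathcal{B}$-norm in terms of $\|V\|^{(av)}_{\mathcal{B},\overline{G},\mu}$, one power coming from the Orlicz side and one from the Lebesgue-versus-$\mu$ comparison on $G^*$ versus $\overline{G}$.

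The main obstacle I expect is the first phase: tracking how the $\mathcal{B}$-modular redistributes under dyadic subdivision requires both the local Ahlfors bounds \eqref{AhlforsR} to relate $\mu(Q)$ to the $\mu$-measures of its children and the judicious choice of axes from Corollary \ref{cor-direct}, which prevents $\mu$ from charging a line parallel to a subdivision edge and thereby rules out degenerate boundary situations. Keeping the final dependence on $c_1(R)/c_0(R)$ linear and on $\kappa_0$ quadratic is then a matter of careful bookkeeping rather than a new idea.
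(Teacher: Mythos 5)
The paper offers no proof of Lemma \ref{measlemma4}; it only remarks that the result ``follows directly from the proof of'' \cite[Lemma 3.2.13]{ME}, so the comparison must be with that argument. Your second and third phases reproduce its structure faithfully: H\"older's inequality for the complementary pair $(\mathcal{A},\mathcal{B})$, a Trudinger-type exponential estimate for the zero-mean extensions $Tu$ (which is where the local Ahlfors bounds \eqref{AhlforsR}, and hence $C(d)$ and $c_1(R)/c_0(R)$, genuinely enter), and summation using the bounded overlap of the enlarged squares together with the extension bound from \cite[Ch.VI, Theorem 5]{Stein}.

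The genuine gap is in your first phase. A dyadic stopping-time subdivision of $G^*$ stops at squares whose $\mathcal{B}$-modular is \emph{below} the threshold, and there is no lower bound on the modular of a stopped square; only its parent exceeds the threshold, and the parents of distinct stopped squares at different generations are nested rather than disjoint, so ``a standard counting argument'' does not give $m_0\le m$. Concretely, if $\mathcal{B}(V)\,d\mu$ is concentrated near a single point, each generation of the subdivision contributes several stopped siblings of negligible modular, and the number of stopped squares grows with the depth of the subdivision, unboundedly for fixed $m$. In addition, the dyadic children of $G^*$ are not centred at points of $\textrm{supp}\,\mu\cap\overline{G}$, which the lemma requires and which you assert without justification; and the purpose of Corollary \ref{cor-direct} is not to keep $\mu$ off ``subdivision edges'' but to make $r\mapsto\mu(\Delta_x(r))$, and hence the averaged Orlicz norm over $\Delta_x(r)$, continuous in $r$. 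That continuity is exactly what the argument of \cite{ME} (following \cite{Eugene} and Solomyak) exploits: for each $x\in\textrm{supp}\,\mu\cap\overline{G}$ one chooses $r_x$ so that the averaged norm of $V$ on $\Delta_x(r_x)$ \emph{equals} the threshold, and a Besicovitch-type covering theorem then extracts a finite subcover with bounded overlap; the resulting lower bound on each selected square is what yields $m_0\le m$ after the threshold is adjusted by the Besicovitch constant. Replacing your dyadic decomposition by this centred covering with continuously chosen radii closes the gap; the remainder of your outline then goes through.
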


Let
\begin{equation}\label{n}
\mathcal{E}_{S_n}[u] := \int_{S_n} \mid\nabla u(x)\mid^2 dx + \beta\int_n^{n + 1}\mid u(x_1, a)\mid^2 dx_1 - \alpha \int_n^{n + 1}\mid u(x_1, 0)\mid^2 dx_1,
\end{equation} for all $u\in W^1_2(S_n)$.\\
Let $\lambda_1 \le \lambda_2 \le \cdots \le \lambda_j \le \cdots$ be the eigenvalues of the above boundary value problem.
By the Min-Max principle, we have
\begin{eqnarray*}
\lambda_1 = \underset{\underset{u\neq 0}{u\in W^1_2(S_n)}}\min \frac{\mathcal{E}_{S_n}[u]}{\int_{S_n} \mid u(x)\mid^2\, dx},\\
\lambda_2 = \underset{\underset{u\neq 0, u \perp u_1}{u\in W^1_2(S_n)}}\min\frac{\mathcal{E}_{S_n}[u]}{\int_{S_n} \mid u(x)\mid^2\, dx},
\end{eqnarray*}
where $u_1$ is a normalized eigenfunction corresponding to $\lambda_1$. The function $u_1$ does not depend on $x_1$ and, viewed as
a function of one variable $x_2$, it is a normalized eigenfunction of $-\Delta = -\partial^2_{x_2}$ on $(0, a)$ with boundary conditions \eqref{R} corresponding to the first eigenvalue $\lambda_1$, moreover $\lambda_1 < \lambda_2$ (see \cite{Kar} or \cite[Section 1.5]{ME}).

It follows from the above that for all $u\in W^1_2(S_n),\;\; u \perp u_1$, one has
$$
 \lambda_2\int_{S_n}\mid u(x)\mid^2\,dx \le \mathcal{E}_{S_n}[u],
$$ which in turn implies
\begin{eqnarray*}
\mathcal{E}_{S_n}[u] - \lambda_1\int_{S_n}\mid u(x)\mid^2\,dx &=& \mathcal{E}_{S_n}[u] - \lambda_2\int_{S_n}\mid u(x)\mid^2\,dx\\ &+& (\lambda_2 - \lambda_1)\int_{S_n}|u(x)|^2\,dx\\&\geq&(\lambda_2 - \lambda_1)\int_{S_n}|u(x)|^2\,dx .
\end{eqnarray*}
Since $\lambda_1 < \lambda_2$, one gets
\begin{equation}\label{Rbeqn1}
\int_{S_n}|u(x)|^2\,dx \le \frac{1}{\lambda_2 - \lambda_1}\left(\mathcal{E}_{S_n}[u] - \lambda_1\int_{S_n}|u(x)|^2\,dx\right),\;\;\;\forall u\in W^1_2(S_n),\;\;u\perp u_1.
\end{equation}
\begin{lemma}\label{rbthm1}{\rm [Ehrling's Lemma]}
Let $X_0,\;X_1\; \textrm{and} \;X_2$ be Banach spaces such that $X_2 \hookrightarrow X_1$ is compact and $X_1\hookrightarrow X_0$. Then for every $\varepsilon > 0$, there exists a constant $C(\varepsilon) > 0$ such that
\begin{equation}\label{eqthm1}
\|u\|_{X_1} \le \varepsilon\|u\|_{X_2} + C(\varepsilon)\|u\|_{X_0}, \;\;\;\;\; \forall u\in X_2\,.
\end{equation}
\end{lemma}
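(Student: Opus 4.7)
The plan is to argue by contradiction, which is the standard route for Ehrling-type inequalities. Suppose the estimate fails for some $\varepsilon_0 > 0$; then for each $n \in \mathbb{N}$ the choice $C = n$ does not work, so there is a vector $u_n \in X_2$ with
\begin{equation*}
\|u_n\|_{X_1} > \varepsilon_0 \|u_n\|_{X_2} + n\, \|u_n\|_{X_0}.
\end{equation*}
Every term is homogeneous of degree one in $u_n$, so I would normalize to $\|u_n\|_{X_1} = 1$. The displayed inequality then forces both $\|u_n\|_{X_2} < \varepsilon_0^{-1}$ (boundedness in $X_2$) and $\|u_n\|_{X_0} < 1/n$ (decay to zero in $X_0$).

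Next I would use the two embedding assumptions in sequence. Since $(u_n)$ is bounded in $X_2$ and $X_2 \hookrightarrow X_1$ is compact, some subsequence $(u_{n_k})$ converges in $X_1$ to a limit $v \in X_1$. The continuous embedding $X_1 \hookrightarrow X_0$ transfers this convergence: $u_{n_k} \to v$ in $X_0$ as well. But $\|u_{n_k}\|_{X_0} < 1/n_k \to 0$, so the $X_0$-limit of $(u_{n_k})$ is $0$, and uniqueness of limits together with the injectivity of the embedding $X_1 \hookrightarrow X_0$ forces $v = 0$ as an element of $X_1$.

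Finally, the convergence $u_{n_k} \to v$ in $X_1$ yields $\|v\|_{X_1} = \lim_k \|u_{n_k}\|_{X_1} = 1$, contradicting $v = 0$. This closes the argument and produces the required constant $C(\varepsilon)$ for every $\varepsilon > 0$. I do not foresee any genuine obstacle: the result is classical, each hypothesis is used exactly once (normalization plus the sequence construction to exploit the negation, compactness to extract a convergent subsequence, and the continuous inclusion to transport the limit to $X_0$), and no further property of the ambient Banach spaces enters the argument. The only point requiring a moment's care is the identification of the $X_1$- and $X_0$-limits, which is immediate from the fact that an embedding is by definition a continuous injection.
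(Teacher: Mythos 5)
Your contradiction--compactness argument is correct: the normalization $\|u_n\|_{X_1}=1$ is legitimate (the strict inequality rules out $\|u_n\|_{X_1}=0$), and the extraction of an $X_1$-convergent subsequence followed by identification of the limit in $X_0$ is exactly the classical proof of Ehrling's lemma. The paper itself gives no proof, deferring to the reference [MR] (Renardy--Rogers), where the argument is precisely the one you present, so there is nothing to compare beyond noting agreement with the standard route.
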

See, e.g., \cite{MR} for details and proof.

Let $S(\mathbb{R}^2)$ be the class of all functions $\varphi\in C^{\infty}(\mathbb{R}^2)$ such that for any multi-index $\gamma$ and any $k\in\mathbb{N}$,
$$
\underset{x\in \mathbb{R}^2}\sup(1 + |x|)^k|\partial^{\gamma}\varphi(x)| < \infty .
$$Denote by $S'(\mathbb{R}^2)$ the dual space of $S(\mathbb{R}^2)$. For $s > 0$, let
$$
H^{s}(\mathbb{R}^2) := \left\{u\in S'(\mathbb{R}^2): \int_{\mathbb{R}^2}(1 + |\xi|^2)^{s}|\widehat{u}(\xi)|^2d\xi < \infty\right\},\;\;s\in\mathbb{R}.
$$Here, $\widehat{u}(\xi)$ is the Fourier image of $u(x)$ defined by
$$
\widehat{u}(\xi) = \frac{1}{2\pi}\int_{\mathbb{R}^2}e^{-ix\xi}u(x)dx.
$$

Let
$$
 H^s(S_n) := \left\{v = \tilde{v}|_{S_n}\;:\; \tilde{v}\in H^s(\mathbb{R}^2)\right\},
$$
$$
\|v\|_{H^s(S_n)} := \underset{\underset{\tilde{v}|_{S_n} = v}{\tilde{v}\in H^s\left({\mathbb{R}^2}\right)}}\inf \|\tilde{v}\|_{H^s(\mathbb{R}^2)}\;.
$$
Now, let $X_0 = L^2(S_n), X_1 = H^s(S_n)\; \textrm{for}\;\frac{1}{2} < s < 1 \; \textrm{and}\; X_2 = W^1_2 (S_n)$ in Lemma \ref{rbthm1}. That $X_2\hookrightarrow X_1$ is compact follows from the Sobolev compact embedding theorem (see, e.g., \cite[Ch. VII]{Ad} or  \cite[$\S$ 1.4.6]{Maz}). Thus we have the following lemma:
\begin{lemma}\label{rblemma0}
Let $\varepsilon > 0$ be given. Then there exists a constant $C_{1} >0$ such that
\begin{eqnarray}\label{rblemmaeqn}
&&\int_{n}^{n + 1}|u(x_1, a)|^2\,dx_1 + \int_n^{n + 1}|u(x_1, 0)|^2\,dx_1 \le C_{1}\left(\mathcal{E}_{S_n}[u] - \lambda_1\int_{S_n}|u(x)|^2\,dx\right),\nonumber\\
&&\forall u\in W^1_2(S_n),\; u\perp u_1.
\end{eqnarray}
\end{lemma}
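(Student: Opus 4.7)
The plan is to combine three ingredients: the trace theorem for $H^s(S_n)$ with $s>1/2$, Ehrling's Lemma in exactly the set-up already provided in the paper, and the bound \eqref{Rbeqn1} which, on the subspace $u\perp u_1$, controls $\|u\|^2_{L^2(S_n)}$ by $E:=\mathcal{E}_{S_n}[u]-\lambda_1\|u\|^2_{L^2(S_n)}$. Since $S_n$ is a bounded Lipschitz rectangle and $s>1/2$, the standard trace inequality provides a constant $C_{\mathrm{tr}}$ with
\[
B \;:=\; \int_n^{n+1}|u(x_1,0)|^2\,dx_1+\int_n^{n+1}|u(x_1,a)|^2\,dx_1 \;\le\; C_{\mathrm{tr}}\,\|u\|^2_{H^s(S_n)}.
\]
Ehrling's Lemma (in its squared form, by the standard rescaling of $\varepsilon$) then gives, for any $\delta>0$, a constant $C(\delta)$ such that $\|u\|^2_{H^s(S_n)}\le \delta\|u\|^2_{W^1_2(S_n)}+C(\delta)\|u\|^2_{L^2(S_n)}$.

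Next I would re-express $\|u\|^2_{W^1_2(S_n)}$ in terms of $E$ and $B$. From the definition \eqref{n},
\[
\|\nabla u\|^2_{L^2(S_n)} = E + \lambda_1\|u\|^2_{L^2(S_n)} + \alpha\int_n^{n+1}|u(x_1,0)|^2\,dx_1 - \beta\int_n^{n+1}|u(x_1,a)|^2\,dx_1,
\]
so $\|\nabla u\|^2_{L^2(S_n)}\le E+\lambda_1\|u\|^2_{L^2(S_n)}+(|\alpha|+|\beta|)B$; combining this with \eqref{Rbeqn1} applied to $\|u\|^2_{L^2(S_n)}$ yields $\|u\|^2_{W^1_2(S_n)}\le c_3E+c_4B$ with $c_3=1+(1+\lambda_1)/(\lambda_2-\lambda_1)$ and $c_4=|\alpha|+|\beta|$. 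Chaining the inequalities and using \eqref{Rbeqn1} once more on the remaining $L^2$-term gives
\[
B\le C_{\mathrm{tr}}\bigl[\delta(c_3E+c_4B)+C(\delta)\|u\|^2_{L^2(S_n)}\bigr] \le C_{\mathrm{tr}}\delta c_4\,B + \Bigl(C_{\mathrm{tr}}\delta c_3+\frac{C_{\mathrm{tr}}C(\delta)}{\lambda_2-\lambda_1}\Bigr)E.
\]
Choosing $\delta$ small enough that $C_{\mathrm{tr}}\delta c_4<1/2$ lets one absorb the $B$-term on the right, producing $B\le C_1E$ with $C_1=2\bigl(C_{\mathrm{tr}}\delta c_3+C_{\mathrm{tr}}C(\delta)/(\lambda_2-\lambda_1)\bigr)$, which is exactly \eqref{rblemmaeqn}.

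The main obstacle is the circular dependence: the bound on $B$ coming from trace plus Ehrling involves $\|u\|^2_{W^1_2(S_n)}$, and because $\mathcal{E}_{S_n}[u]$ itself contains the boundary integrals, the identity for $\|\nabla u\|^2_{L^2(S_n)}$ feeds $B$ back into that $W^1_2$-norm. The absorption trick enabled by choosing $\delta$ sufficiently small is precisely what closes this loop, and it is also where the hypothesis $u\perp u_1$ is essential: it is exactly on this subspace that \eqref{Rbeqn1} is available to dispose of the leftover $\|u\|^2_{L^2(S_n)}$ term in terms of $E$.
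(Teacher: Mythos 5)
Your proof is correct and follows essentially the same route as the paper's: trace theorem for $s>1/2$ plus Ehrling's lemma, rewriting the gradient integral via the form identity so that the boundary term reappears on the right with a coefficient proportional to $\delta$, absorbing it by taking $\delta$ small, and then using \eqref{Rbeqn1} on the subspace $u\perp u_1$ to convert the leftover $L^2$ term into the quantity $\mathcal{E}_{S_n}[u]-\lambda_1\int_{S_n}|u|^2\,dx$. The only cosmetic difference is that when $\lambda_1<0$ the term $\lambda_1\|u\|^2_{L^2(S_n)}$ should simply be dropped rather than estimated through \eqref{Rbeqn1} (the paper uses $\max\{0,\lambda_1\}$ for this), which does not affect the argument.
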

\begin{proof}
In this proof, we make use of \eqref{Rbeqn1} and Lemma \ref{rbthm1}. For $s > \frac{1}{2}$, the trace theorem  and Lemma \ref{rbthm1} imply
\begin{eqnarray*}
&&\int_n^{n + 1}|u(x_1, a)|^2dx_1 + \int_n^{n + 1}|u(x_1, 0)|^2dx_1 \le C_s\|u\|_{X_1}\\ && \le C_s\left(\varepsilon\left(\int_{S_n}|\nabla u(x)|^2 dx + \int_{S_n}|u(x)|^2dx\right) + C(\varepsilon)\int_{S_n}|u(x)|^2 dx\right)\\ &&=C_s\varepsilon\int_{S_n}|\nabla u(x)|^2dx + C_s(\varepsilon + C(\varepsilon))\int_{S_n}|u(x)|^2dx\\&& = C_s\varepsilon\Big(\mathcal{E}_n[u] - \lambda_1\int_{S_n}|u(x)|^2 dx - \beta\int_n^{n + 1}|u(x_1, a)|^2\,dx_1\\&& \,+ \alpha\int_n^{n + 1}|u(x_1, 0)|^2\,dx_1 + \lambda_1\int_{S_n}|u(x)|^2\,dx\Big)  +  C_s(\varepsilon + C(\varepsilon))\int_{S_n}|u(x)|^2\,dx\\&&\le C_s\varepsilon\left(\mathcal{E}_{S_n}[u] - \lambda_1\int_{S_n}|u(x)|^2\,dx\right)\\&&\, + C_s\varepsilon \;\textrm{max}\{|\beta|,|\alpha|\}\left( \int_n^{n + 1}|u(x_1, a)|^2\,dx_1 + \int_n^{n + 1}|u(x_1, 0)|^2\,dx_1\right)\\&&  + \; C_s\left(\varepsilon(\lambda_1 + 1) + C(\varepsilon)\right)\int_{S_n}|u(x)|^2\,dx\,.
\end{eqnarray*}Take $\varepsilon\le \frac{1}{2C_s\max\{|\beta|,|\alpha|\}}$. Then
\begin{eqnarray*}
&&\int_n^{n + 1}|u(x_1, a)|^2\,dx_1 + \int_n^{n + 1}|u(x_1, 0)|^2\,dx_1 \\&&\le C_s\varepsilon\left(\mathcal{E}_{S_n}[u] - \lambda_1\int_{S_n}|u(x)|^2\,dx\right) + \frac{1}{2} \int_n^{n + 1}|u(x_1, a)|^2\,dx_1\\&& + \frac{1}{2}\int_n^{n + 1}|u(x_1, 0)|^2\,dx_1   + C_s\left(\varepsilon(\lambda_1 + 1) + C(\varepsilon)\right)\int_{S_n}|u(x)|^2\,dx.
\end{eqnarray*}
Hence \eqref{Rbeqn1} yields
\begin{eqnarray*}
&&\int_n^{n + 1}|u(x_1, a)|^2\,dx_1 + \int_n^{n + 1}|u(x_1, 0)|^2\,dx_1\\ &&\le 2C_s\varepsilon\left(\mathcal{E}_{S_n}[u] - \lambda_1\int_{S_n}|u(x)|^2\,dx\right) + 2C_s\left(\varepsilon(\lambda_1 + 1) + C(\varepsilon)\right)\int_{S_n}|u(x)|^2\,dx\\&&\le C_s\left(2\varepsilon + \frac{2}{\lambda_2 - \lambda_1}(\varepsilon(\lambda_1 + 1) + C(\varepsilon))\right)\left(\mathcal{E}_{S_n}[u] - \lambda_1\int_{S_n}|u(x)|^2\,dx\right)\\&& = C_{1}\left(\mathcal{E}_{S_n}[u] - \lambda_1\int_{S_n}|u(x)|^2\,dx\right),
\end{eqnarray*}where
$$
C_{1} := C_s\left(2\varepsilon + \frac{2}{\lambda_2 - \lambda_1}(\varepsilon(\lambda_1 + 1) + C(\varepsilon))\right).
$$
\end{proof}As a consequence of Lemma \ref{rblemma0} and \eqref{Rbeqn1} we have the following Lemma
\begin{lemma}\label{cor1}
There exists a constant $C_2 > 0$ such that
\begin{equation}\label{eqn0}
\int_{S_n}|\nabla u(x)|^2\,dx \le C_2\left(\mathcal{E}_{S_n}[u] - \lambda_1\int_{S_n}|u(x)|^2\,dx\right), \;\;\;\;\forall u\in W^1_2(S_n),\;u\perp u_1.
\end{equation}
\end{lemma}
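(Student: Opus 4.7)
The plan is to isolate the gradient term in the defining identity for $\mathcal{E}_{S_n}[u]$ and then estimate each of the remaining contributions by $\mathcal{E}_{S_n}[u] - \lambda_1\int_{S_n}|u|^2\,dx$ using the lemmas already established.

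First I would rewrite the definition \eqref{n} of $\mathcal{E}_{S_n}[u]$ as
\begin{equation*}
\int_{S_n}|\nabla u(x)|^2\,dx = \mathcal{E}_{S_n}[u] - \beta\int_n^{n+1}|u(x_1, a)|^2\,dx_1 + \alpha\int_n^{n+1}|u(x_1, 0)|^2\,dx_1,
\end{equation*}
and bound the last two integrals by $\max\{|\alpha|,|\beta|\}$ times the sum of the two boundary $L^2$-integrals. Lemma \ref{rblemma0} applied to $u \in W^1_2(S_n)$ with $u \perp u_1$ then controls this boundary sum by $C_1(\mathcal{E}_{S_n}[u] - \lambda_1\int_{S_n}|u|^2\,dx)$.

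Next I need to handle the leading term $\mathcal{E}_{S_n}[u]$ itself, which is not yet of the required form when $\lambda_1 > 0$. Here I would use \eqref{Rbeqn1}: since $u \perp u_1$, one has
\begin{equation*}
\int_{S_n}|u(x)|^2\,dx \le \frac{1}{\lambda_2 - \lambda_1}\bigl(\mathcal{E}_{S_n}[u] - \lambda_1\int_{S_n}|u(x)|^2\,dx\bigr).
\end{equation*}
Writing $\mathcal{E}_{S_n}[u] = (\mathcal{E}_{S_n}[u] - \lambda_1\int_{S_n}|u|^2\,dx) + \lambda_1\int_{S_n}|u|^2\,dx$ and estimating the second summand by the displayed inequality (treating the cases $\lambda_1\ge 0$ and $\lambda_1<0$ uniformly by using $|\lambda_1|$), I obtain a constant $c$ such that $\mathcal{E}_{S_n}[u] \le c(\mathcal{E}_{S_n}[u] - \lambda_1\int_{S_n}|u|^2\,dx)$.

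Combining these gives \eqref{eqn0} with $C_2 := c + C_1\max\{|\alpha|,|\beta|\}$. There is no genuine obstacle here; the one point that requires a little care is making sure the coefficient in front of $\mathcal{E}_{S_n}[u]$ is controlled regardless of the sign of $\lambda_1$, which is precisely what \eqref{Rbeqn1} (a direct consequence of $\lambda_1 < \lambda_2$ and the Min-Max characterization) delivers.
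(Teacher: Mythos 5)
Your argument is correct and is essentially the paper's own proof: both decompose $\int_{S_n}|\nabla u|^2\,dx = \mathcal{E}_{S_n}[u] - \beta\int_n^{n+1}|u(x_1,a)|^2\,dx_1 + \alpha\int_n^{n+1}|u(x_1,0)|^2\,dx_1$, control the boundary terms via Lemma \ref{rblemma0} and the leftover $\lambda_1\int_{S_n}|u|^2\,dx$ via \eqref{Rbeqn1}. The only cosmetic difference is that the paper uses $\max\{0,\lambda_1\}$ where you use $|\lambda_1|$, which just gives a marginally larger constant when $\lambda_1<0$.
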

\begin{proof}
\begin{eqnarray*}
\int_{S_n}|\nabla u(x)|^2\,dx &=& \mathcal{E}_{S_n}[u] - \lambda_1\int_{S_n}|u(x)|^2\,dx + \lambda_1\int_{S_n}|u(x)|^2\,dx \\&-& \beta \int_n^{n + 1}|u(x_1, a)|^2\,dx_1 + \alpha \int_n^{n + 1}|u(x_1, 0)|^2\,dx_1\\
&\le& \left(1 + C_{1}\max\{|\alpha|, |\beta|\} \right)\left(\mathcal{E}_{S_n}[u] - \lambda_1\int_{S_n}|u(x)|^2\,dx \right)\\ &+& \lambda_1\int_{S_n}|u(x)|^2\,dx \\
&\le& \left(1 + C_{1}\max\{|\alpha|, |\beta|\} \right)\left(\mathcal{E}_{S_n}[u] - \lambda_1\int_{S_n}|u(x)|^2\,dx \right)\\ &+& \frac{\max\{0,\lambda_1\}}{\lambda_2 - \lambda_1}\left(\mathcal{E}_{S_n}[u] - \lambda_1\int_{S_n}|u(x)|^2\,dx \right)\\
&=& \left(1 + C_{1}\max\{|\alpha|, |\beta|\} + \frac{\max\{0,\lambda_1\}}{\lambda_2 - \lambda_1} \right)\left(\mathcal{E}_{S_n}[u] - \lambda_1\int_{S_n}|u(x)|^2\,dx \right)\\&=& C_2\left(\mathcal{E}_{S_n}[u] - \lambda_1\int_{S_n}|u(x)|^2\,dx \right),
\end{eqnarray*}where
$$
C_2 := 1 + C_{1}\max\{|\alpha|, |\beta|\} + \frac{\max\{0,\lambda_1\}}{\lambda_2 - \lambda_1}\,.
$$
\end{proof}
Let
 $\mathcal{H}_1 := P W^1_2(S)$ and $\mathcal{H}_2 := (I- P)W^1_2(S)$, where
\begin{equation}\label{projection}
Pu(x) := \left(\int_0^au(x)\overline{u_1}(x_2)\,dx_2\right)u_1(x_2) = w(x_1)u_1(x_2)\,, \;\;\;\;\;\;\; \forall u\in W^1_2(S)
\end{equation}
and
$$
w(x_1):= \int_0^au(x)\overline{u_1}(x_2)\,dx_2\,.
$$Then $P$ is a projection since $P^2 = P$.
\begin{lemma}\label{rblemma1}
For all $u\in W^1_2(S), \;\;\langle (I - P)u(x_1,.), u_1\rangle_{L^2(0, a)} = 0$ for almost all $x_1 \in \mathbb{R}$.
\end{lemma}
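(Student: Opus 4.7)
The plan is to verify the identity by direct computation in the $L^2(0,a)$ inner product on each horizontal slice, using only the definition of $P$ in \eqref{projection} and the normalization of $u_1$. The "almost every" qualifier will be inherited automatically from Fubini's theorem.

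First I would observe that since $u \in W^1_2(S) \subset L^2(S)$, Fubini's theorem guarantees that the slice $u(x_1,\cdot)$ lies in $L^2((0,a))$ for almost every $x_1 \in \mathbb{R}$, and that the scalar $w(x_1) = \int_0^a u(x_1,x_2)\overline{u_1}(x_2)\,dx_2$ is well defined and measurable on $\mathbb{R}$. Fix any such $x_1$.

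Next I would compute directly. By linearity of the inner product and the definition \eqref{projection},
\begin{eqnarray*}
\langle (I-P)u(x_1,\cdot), u_1\rangle_{L^2(0,a)}
&=& \int_0^a u(x_1,x_2)\overline{u_1}(x_2)\,dx_2 - \int_0^a w(x_1)u_1(x_2)\overline{u_1}(x_2)\,dx_2 \\
&=& w(x_1) - w(x_1)\int_0^a |u_1(x_2)|^2\,dx_2 \\
&=& w(x_1)\bigl(1 - \|u_1\|_{L^2(0,a)}^2\bigr).
\end{eqnarray*}
Since $u_1$ is normalized in $L^2((0,a))$ with respect to the Lebesgue measure (as stated just before Theorem \ref{rbthm}), we have $\|u_1\|_{L^2(0,a)}^2 = 1$, so the right-hand side vanishes.

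There is essentially no hard step here: the statement is just the standard fact that $P$ is the orthogonal projection (on each slice) onto the one-dimensional subspace spanned by $u_1$. The only subtlety to mention is why the pointwise-in-$x_1$ assertion makes sense, which is the Fubini step above; once $u(x_1,\cdot) \in L^2((0,a))$ for a.e.\ $x_1$, the computation is automatic. Correspondingly, I would conclude that $\langle (I-P)u(x_1,\cdot), u_1\rangle_{L^2(0,a)} = 0$ for almost all $x_1 \in \mathbb{R}$, which is exactly the statement of the lemma.
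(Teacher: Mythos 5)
Your computation is exactly the paper's proof: expand $(I-P)u(x_1,\cdot)$, use linearity, and cancel via $\|u_1\|_{L^2(0,a)}^2 = 1$. The only difference is that you make the Fubini justification for the ``almost every $x_1$'' qualifier explicit, which the paper leaves implicit; the argument is correct.
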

\begin{proof}
Since $Pu = \langle u(x_1,.), u_1\rangle_{L^2(0, a) }u_1$, then
\begin{eqnarray*}
\langle (I - P)u(x_1,.), u_1\rangle_{L^2(0, a)} &=& \langle u(x_1,.) - \langle u(x_1,.), u_1\rangle_{L^2(0, a)} u_1, u_1\rangle_{L^2(0, a)}\\&=&\langle u(x_1,.), u_1\rangle_{L^2(0, a)} - \langle u(x_1,.), u_1\rangle_{L^2(0, a)} \langle u_1, u_1\rangle_{L^2(0, a)}\\&=& \langle u(x_1,.), u_1\rangle_{L^2(0, a)} - \langle u(x_1,.), u_1\rangle_{L^2(0, a)}  = 0.
\end{eqnarray*}
\end{proof}
\begin{lemma}\label{rblemma2}
For all $v\in\mathcal{H}_1, \; \tilde{v}\in\mathcal{H}_2,\;\;\langle v, \tilde{v}\rangle_{L^2(S)} = 0$ and  $\langle v_{x_1}, \tilde{v}_{x_1}\rangle_{L^2(S)} = 0$.
\end{lemma}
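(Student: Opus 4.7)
The plan is to exploit the tensor-product structure: any $v\in\mathcal{H}_1$ is of the form $v(x)=w(x_1)u_1(x_2)$ with $w(x_1)=\int_0^a u(x_1,x_2)\overline{u_1}(x_2)\,dx_2$, and by Lemma \ref{rblemma1} any $\tilde v\in\mathcal{H}_2$ satisfies $\langle \tilde v(x_1,\cdot),u_1\rangle_{L^2(0,a)}=0$ for a.e.\ $x_1\in\mathbb{R}$. Since $v\in W^1_2(S)$, one also has $w\in W^1_2(\mathbb{R})$ with $v_{x_1}(x)=w'(x_1)u_1(x_2)$.

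The first identity is then essentially Fubini. Writing
\[
\langle v,\tilde v\rangle_{L^2(S)} = \int_{\mathbb{R}}w(x_1)\int_0^a u_1(x_2)\overline{\tilde v(x_1,x_2)}\,dx_2\,dx_1
= \int_{\mathbb{R}}w(x_1)\overline{\langle \tilde v(x_1,\cdot),u_1\rangle_{L^2(0,a)}}\,dx_1,
\]
the inner factor vanishes a.e.\ by Lemma \ref{rblemma1}, so the integral is zero.

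For the second identity the same Fubini manipulation reduces matters to showing that
\[
g(x_1):=\int_0^a \tilde v_{x_1}(x_1,x_2)\overline{u_1}(x_2)\,dx_2 \;=\; 0 \quad\text{for a.e.\ }x_1.
\]
I would identify $g$ as the weak derivative of $f(x_1):=\langle \tilde v(x_1,\cdot),u_1\rangle_{L^2(0,a)}$. Concretely, take $\phi\in C_c^\infty(\mathbb{R})$; since $\tilde v\in W^1_2(S)$, for a.e.\ $x_2\in(0,a)$ the slice $\tilde v(\cdot,x_2)$ lies in $W^1_2(\mathbb{R})$ with weak derivative $\tilde v_{x_1}(\cdot,x_2)$, so slicewise integration by parts combined with Fubini yields
\[
-\int_{\mathbb{R}} f(x_1)\phi'(x_1)\,dx_1 = \int_{\mathbb{R}}\phi(x_1)g(x_1)\,dx_1,
\]
which identifies $g=f'$ in the weak sense. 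Lemma \ref{rblemma1} gives $f\equiv 0$ a.e., hence $g\equiv 0$ a.e., and substituting back into $\langle v_{x_1},\tilde v_{x_1}\rangle_{L^2(S)}=\int_{\mathbb{R}}w'(x_1)\overline{g(x_1)}\,dx_1$ finishes the proof.

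The only nontrivial point is the slicewise Sobolev property of $\tilde v$ used to justify the integration by parts; I would invoke the standard characterization that a $W^1_2$ function on a product domain admits an absolutely continuous representative along almost every line parallel to a coordinate axis, which is classical (see, e.g., the ACL characterization in Maz'ya's book cited earlier).
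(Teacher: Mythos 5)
Your proof is correct and follows essentially the same route as the paper: Fubini plus the pointwise orthogonality $\langle \tilde v(x_1,\cdot),u_1\rangle_{L^2(0,a)}=0$ from Lemma \ref{rblemma1} for the first identity, and the commutation of the projection $P$ with $\partial_{x_1}$ for the second. The only difference is that you justify the identity $g=f'$ (equivalently, $(Pu)_{x_1}=Pu_{x_1}$) via weak derivatives and the ACL characterization, whereas the paper simply asserts it; your version supplies a detail the paper leaves implicit.
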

\begin{proof}
\begin{eqnarray*}
\langle \tilde{v}, v\rangle_{L^2(S)} &=& \int_S(I - P)u(x) . \overline{w(x_1)u_1({x_2)}}\,dx\\ &=&\int_{\mathbb{R}}\overline{w(x_1)}\left(\int_0^a u(x)\overline{u_1(x_2)}\,dx_2\right)dx_1\\&-&  \int_{\mathbb{R}}\overline{w(x_1)}\left[\left(\int_0^au\overline{u_1(x_2)}\,dx_2\right)\left(\int_0^a u_1(x_2)\overline{u_1(x_2)}\,dx_2\right)\right]dx_1\\&=&\int_{\mathbb{R}}\overline{w(x_1)}\left(\int_0^a u(x)\overline{u_1(x_2)}\,dx_2\right)dx_1\\ &-& \int_{\mathbb{R}}\overline{w(x_1)}\left[\left(\int_0^au(x)\overline{u_1(x_2)}\,dx_2\right)\|u_1\|^2\right]dx_1\\&=&\int_{\mathbb{R}}\overline{w(x_1)}\left(\int_0^a u(x)\overline{u_1(x_2)}\,dx_2\right)dx_1\\ &-& \int_{\mathbb{R}}\overline{w(x_1)}\left(\int_0^a u(x)\overline{u_1(x_2)}\,dx_2\right)dx_1 = 0.
\end{eqnarray*}
Since $(Pw)_{x_1} = P w_{x_1}$ for every $w \in W^1_2(S)$, one has
for all $v\in\mathcal{H}_1 \; \textrm{and}\; \tilde{v}\in\mathcal{H}_2$, $$v_{x_1}\in P L^2(S), \; \tilde{v}_{x_1}\in (I - P)L^2(S),$$ and hence
it follows from the above that
$$\langle v_{x_1}, \tilde{v}_{x_1}\rangle_{L^2(S)} = 0.$$
\end{proof}
\begin{lemma}\label{rblemma2*}
Let
$$
\mathcal{E}_S[u] := \int_S|\nabla u(x)|^2\,dx  + \beta \int_{\mathbb{R}}|u(x_1, a)|^2\,dx_1 - \alpha \int_{\mathbb{R}}|u(x_1, 0)|^2\,dx_1, \;\;\forall u \in W^1_2(S).
$$ Then
$$
\mathcal{E}_S[u] = \mathcal{E}_S[v] + \mathcal{E}_S[\tilde{v}],\;\;\;\forall u = v + \tilde{v},\,\,v\in\mathcal{H}_1\,,\;\;\tilde{v}\in\mathcal{H}_2\,.
$$
\end{lemma}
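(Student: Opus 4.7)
The plan is to expand $\mathcal{E}_S[u]$ with $u = v + \tilde{v}$ and show that all cross terms vanish. Writing $|\nabla u|^2 = |u_{x_1}|^2 + |u_{x_2}|^2$ and expanding the squares leads to
\[
\mathcal{E}_S[u] - \mathcal{E}_S[v] - \mathcal{E}_S[\tilde{v}] = 2\,\mathrm{Re}\!\int_S\! v_{x_1}\overline{\tilde{v}_{x_1}}\,dx + 2\,\mathrm{Re}\!\int_S\! v_{x_2}\overline{\tilde{v}_{x_2}}\,dx + B,
\]
where $B$ collects the boundary cross terms obtained from expanding $|u(x_1,a)|^2$ and $|u(x_1,0)|^2$. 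The target is to show that the right-hand side is zero.

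The first integral drops out immediately by Lemma \ref{rblemma2}. For the second, I will exploit the explicit structure $v(x) = w(x_1)u_1(x_2)$, so that $v_{x_2} = w(x_1)u_1'(x_2)$, and then integrate by parts in the $x_2$ variable on each slice $\{x_1\}\times(0,a)$:
\[
\int_0^a u_1'(x_2)\overline{\tilde{v}_{x_2}(x_1,x_2)}\,dx_2 = \bigl[u_1'(x_2)\overline{\tilde{v}(x_1,x_2)}\bigr]_0^a - \int_0^a u_1''(x_2)\overline{\tilde{v}(x_1,x_2)}\,dx_2.
\]
Since $u_1$ solves $-u_1'' = \lambda_1 u_1$, the remaining interior integral becomes a multiple of $\langle \tilde{v}(x_1,\cdot),u_1\rangle_{L^2(0,a)}$, which is zero for a.e.\ $x_1$ by Lemma \ref{rblemma1}. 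The boundary terms, using the Robin conditions $u_1'(0) = -\alpha u_1(0)$ and $u_1'(a) = -\beta u_1(a)$, yield
\[
2\,\mathrm{Re}\!\int_S\! v_{x_2}\overline{\tilde{v}_{x_2}}\,dx = 2\,\mathrm{Re}\!\int_{\mathbb{R}}\!\bigl[-\beta\, v(x_1,a)\overline{\tilde{v}(x_1,a)} + \alpha\, v(x_1,0)\overline{\tilde{v}(x_1,0)}\bigr]\,dx_1,
\]
after pulling $w(x_1)u_1(a) = v(x_1,a)$ and $w(x_1)u_1(0) = v(x_1,0)$ inside.

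Finally, a direct expansion of the boundary integrals in $\mathcal{E}_S[u]$ gives
\[
B = 2\beta\,\mathrm{Re}\!\int_{\mathbb{R}}\! v(x_1,a)\overline{\tilde{v}(x_1,a)}\,dx_1 - 2\alpha\,\mathrm{Re}\!\int_{\mathbb{R}}\! v(x_1,0)\overline{\tilde{v}(x_1,0)}\,dx_1,
\]
which is precisely the negative of the contribution above, so the two cancel and the identity follows. The only delicate point is the integration by parts in $x_2$: it must be justified for $u \in W^1_2(S)$ via the standard trace argument (approximation by smooth functions restricted to slices), and this is where the Robin conditions on $u_1$ enter decisively — without them the boundary terms would not match $B$. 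Everything else is bookkeeping.
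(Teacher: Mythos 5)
Your proof is correct and follows essentially the same route as the paper: the $x_1$ cross term vanishes by Lemma \ref{rblemma2}, and the $x_2$ cross term is handled by integrating by parts in $x_2$ on slices, invoking $-u_1''=\lambda_1 u_1$ together with Lemma \ref{rblemma1} to kill the interior term, and using the Robin conditions on $u_1$ so that the resulting boundary terms cancel exactly against the boundary cross terms of $\mathcal{E}_S$. The sign bookkeeping checks out, so no changes are needed.
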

\begin{proof}
 \begin{eqnarray*}
 \langle\tilde{v}_{x_2}, v_{x_2}\rangle_{L^2(S)} &=& \int_{S}\frac{\partial}{\partial x_2}(I - P)u(x) \frac{\partial}{\partial x_2}(\overline{w(x_1)u_1(x_2)})dx\\&=& \int_{\mathbb{R}}\overline{w(x_1)}\left[\int_0^a\frac{\partial}{\partial x_2}(I - P)u(x) \frac{\partial}{\partial x_2}(\overline{u_1(x_2)})\,dx_2\right]dx_1.
 \end{eqnarray*}Integration by parts and Lemma \ref{rblemma1} give
 \begin{eqnarray*}
 \langle \tilde{v}_{x_2}, v_{x_2}\rangle_{L^2(S)} &=& \int_{\mathbb{R}}\overline{w(x_1)}(I -P)u(x_1, a)\frac{\partial}{\partial x_2}\overline{u_1(a)}\,dx_1\\ &-& \int_{\mathbb{R}}\overline{w(x_1)}(I - P)u(x_1, 0)\frac{\partial}{\partial x_2}\overline{u_1(0)}\,dx_1 \\&+& \int_{\mathbb{R}}\overline{w(x_1)}\left(\underbrace{\lambda_1\int_0^a(I - P)u(x)\overline{u_1(x_2)}\,dx_2}_{= 0}\right)dx_1\\&=&-\beta\int_{\mathbb{R}}\overline{w(x_1)}(I - P)u(x_1, a)\overline{u_1(a)}dx_1\\ &+& \alpha\int_{\mathbb{R}}\overline{w(x_1)}(I - P)u(x_1, 0)\overline{u_1(0)}dx_1.
 \end{eqnarray*}Thus, this together with Lemma \ref{rblemma2} yield
 \begin{eqnarray*}
 \mathcal{E}_S( \tilde{v}, v)&=&\int_S\nabla \tilde{v}\overline{\nabla v}\,dx + \beta\int_{\mathbb{R}}\overline{w(x_1)}(I - P)u(x_1, a)\overline{u_1(a)}dx_1\\ &-& \alpha\int_{\mathbb{R}}\overline{w(x_1)}(I - P)u(x_1, 0)\overline{u_1(0)}dx_1\\ &=& - \beta\int_{\mathbb{R}}\overline{w(x_1)}(I - P)u(x_1, a)\overline{u_1(a)}dx_1\\ &+& \alpha\int_{\mathbb{R}}\overline{w(x_1)}(I - P)u(x_1, 0)\overline{u_1(0)}dx_1\\&+&  \beta\int_{\mathbb{R}}\overline{w(x_1)}(I - P)u(x_1, a)\overline{u_1(a)}dx_1\\ &-& \alpha\int_{\mathbb{R}}\overline{w(x_1)}(I - P)u(x_1, 0)\overline{u_1(0)}dx_1 = 0.
 \end{eqnarray*}
 This means that for all $ u \in W^1_2(S)$
 $$
 \mathcal{E}_S[u] = \mathcal{E}_S[v] + \mathcal{E}_S[\tilde{v}], \;\;\;\;\forall u = v + \tilde{v},\,\, v\in\mathcal{H}_1,\;\;\tilde{v}\in\mathcal{H}_2.
 $$
 \end{proof}

\section{Proof of Threorem \ref{rbthm}}
Let
\begin{eqnarray*}
\mathcal{E}_S[u] &:=& \int_S|\nabla u(x)|^2\,dx - \alpha\int_{\mathbb{R}}|u(x_1, 0)|^2\,dx_1 + \beta \int_{\mathbb{R}}|u(x_1, a)|^2\,dx_1,\\
\textrm{Dom}(\mathcal{E}_S) &=& W^1_2(S).
\end{eqnarray*}
 and
\begin{eqnarray*}
\mathcal{E}_{\lambda_1,\mu, S}[u] &:=& \mathcal{E}_S[u] - \lambda_1 \int_{S}| u(x)|^2\,dx - \int_{\overline{S}} V(x)|u(x)|^2\,d\mu(x),\\
\textrm{Dom} (\mathcal{E}_{\lambda_1, \mu, S}) &=& W^1_2(S)\cap L^2\left(\overline{S}, Vd\mu\right).
\end{eqnarray*} Then  one has
\begin{equation}\label{radstrip}
N_-\left(\mathcal{E}_{\lambda_1,\mu, S}\right) \le N_-(\mathcal{E}_{1, 2\mu}) + N_-(\mathcal{E}_{2, 2\mu})\,
\end{equation}
where $\mathcal{E}_{1, 2\mu}$ and $\mathcal{E}_{2, 2\mu}$ are the restrictions of the form $\mathcal{E}_{\lambda_1,2\mu, S}$ to the spaces $\mathcal{H}_1$ and $\mathcal{H}_2$ respectively. We start by estimating the first term in the right-hand side of \eqref{radstrip}.
\\\\
Recall that for all $ u\in\mathcal{H}_1\,, \; u(x) = w(x_1)u_1(x_2)$ (see \eqref{projection}). Let $I$ be an arbitrary interval in $\mathbb{R}$ and let
$$
\nu(I) := \int_I\int_0^a V(x)|u_1(x_2)|^2\,d\mu(x).
$$
Then
\begin{eqnarray*}
\int_{\overline{S}} V(x)|u(x)|^2\,d\mu(x) &=& \int_{\mathbb{R}}\int_0^a V(x)|w(x_1)u_1(x_2)|^2\,d\mu(x)\\
&=& \int_{\mathbb{R}}|w(x_1)|^2\,d\nu(x_1)
=\int_{\mathbb{R}}|w(x_1)|^2\,d\nu(x_1).
\end{eqnarray*}

 On the subspace $\mathcal{H}_1$, one has
 \begin{eqnarray*}
 &&\int_S\left(\mid\nabla u(x)\mid^2 - \lambda_1\mid u(x)\mid^2\right)dx + \beta\int_{\mathbb{R}}\mid u(x_1, a)\mid^2dx_1\\&& - \alpha\int_{\mathbb{R}}\mid u(x_1, 0)\mid^2dx_1 - 2\int_{\overline{S}} V(x)\mid u(x)\mid^2d\mu(x)\\&&=
 \int_{\mathbb{R}}\mid w'(x_1)\mid^2\left(\int_0^a\mid u_1(x_2)\mid^2dx_2\right)dx_1\\&&
 + \int_{\mathbb{R}}\mid w(x_1)\mid^2\left(\int_0^a\mid u'_1(x_2)\mid^2dx_2\right)dx_1\\&&
 - \lambda_1\int_{\mathbb{R}}\mid w(x_1)\mid^2\left(\int_0^a\mid u_1(x_2)\mid^2dx_2\right)dx_1\\&&
 + \beta\int_{\mathbb{R}}\mid w(x_1)u_1(a)\mid^2dx_1 - \alpha\int_{\mathbb{R}}\mid w(x_1)u_1(0)\mid^2dx_1\\&&
 - 2\int_{\mathbb{R}}|w(x_1)|^2\,d\nu(x_1).
 \end{eqnarray*}But
 \begin{eqnarray*}
 &&\int_{\mathbb{R}}\mid w(x_1)\mid^2\left(\int_0^a\mid u'_1(x_2)\mid^2dx_2\right)dx_1\\&& = \lambda_1\int_{\mathbb{R}}\mid w(x_1)\mid^2\left(\int_0^a\mid u_1(x_2)\mid^2dx_2\right)dx_1 \\&& - \beta\int_{\mathbb{R}}\mid w(x_1)u_1(a)\mid^2dx_1 + \alpha\int_{\mathbb{R}}\mid w(x_1)u_1(0)\mid^2dx_1,
 \end{eqnarray*} which implies
 \begin{eqnarray}\label{stripH1}
 &&\int_S\left(\mid\nabla u(x)\mid^2 - \lambda_1\mid u(x)\mid^2\right)dx + \beta\int_{\mathbb{R}}\mid u(x_1, a)\mid^2dx_1 \nonumber\\&& - \alpha\int_{\mathbb{R}}\mid u(x_1, 0)\mid^2dx_1 - 2\int_{\overline{S}} V(x)\mid u(x)\mid^2d\mu(x) \nonumber\\&&=
 \|u_1\|^2\int_{\mathbb{R}}\mid w'(x_1)\mid^2dx_1 - 2\int_{\mathbb{R}}|w(x_1)|^2\,d\nu(x_1)\nonumber\\&&= \int_{\mathbb{R}}\mid w'(x_1)\mid^2dx_1 - 2\int_{\mathbb{R}}|w(x_1)|^2\,d\nu(x_1).
 \end{eqnarray}
 Hence, we have the following one-dimensional Schr\"odinger operator
 $$
 - \frac{d^2}{dx^2_1} - 2\nu\;\;\;\;\; \textrm{on} \;\;L^2(\mathbb{R})\,.
 $$
 Let
\begin{eqnarray*}
\mathcal{E}_{1,2\nu}[w] &:=& \int_{\mathbb{R}}|w'(x_1)|^2\,dx_1 - 2\int_{\mathbb{R}}|w(x_1)|^2\,d\nu(x_1),\\
\textrm{Dom} (\mathcal{E}_{1, 2\nu}) &=& W^1_2(\mathbb{R})\cap L^2\left(\mathbb{R}, d\nu\right),
\end{eqnarray*}

\begin{eqnarray*}
F_n &:=& \int_{I_n}|x_1|\,d\nu(x_1),\;\; n\neq 0,\\ F_0 &:=& \int_{I_0}d\nu(x_1).
\end{eqnarray*}
Then
\begin{equation}\label{radest}
N_-\left(\mathcal{E}_{1,2\nu}\right) \le 1 + 7.61 \underset{\{F_n > 0.046,\;n\in\mathbb{Z}\}}\sum \sqrt{F_n}
\end{equation}(see \cite[(2.42)]{ME}, see also the estimate before (39) in \cite{Eugene}).
To write the above estimate in terms of the original measure, let
\begin{eqnarray*}
\mathcal{F}_n &:=& \int_{I_n}\int_0^a|x_1|V(x)|u_1(x_2)|^2\,d\mu(x),\;\; n\neq 0,\\ \mathcal{F}_0 &:=& \int_{I_0}\int_0^a V(x)|u_1(x_2)|^2\,d\mu(x).
\end{eqnarray*}
Then $F_n = \mathcal{F}_n$. Hence
\begin{equation}\label{radest1}
N_-\left(\mathcal{E}_{1,2\mu}\right) \le 1 + 7.16 \underset{\{\mathcal{F}_n > 0.046,\;n\in\mathbb{Z}\}}\sum \sqrt{\mathcal{F}_n}\,.
\end{equation}

Next, we consider the subspace $\mathcal{H}_2 \subset W^1_2(S)$. By \eqref{Rbeqn1} and \eqref{eqn0}, one has
\begin{equation}\label{W1}
\|u\|^2_{W^1_2(S_n)} \le \left(\frac{1}{\lambda_2 - \lambda_1} + C_2\right)\left(\mathcal{E}_{S_n}[u] - \lambda_1\int_{S_n}|u(x)|^2dx\right)
\end{equation}for all $u\in W^1_2(S_n), \; u \perp u_1$.\\
Let $S_n := (n , n+ 1)\times (0, a)$, $n\in\mathbb{Z}$ with $\mu(\overline{S_n}) > 0$ be the set $G$ in Lemma \ref{measlemma4} and $S^*_n$ be defined as above
(see the paragraph after Corollary \ref{cor-direct}). For each $n$, $S^*_n$ intersects not more than $N_0$ rectangles to the left of $S_n$ and $N_0$ rectangles to right of $S_n$, where $N_0\in\mathbb{N}$ depends only on $a$ and $\theta_0$ in Corollary \ref{cor-direct}.
(It is not difficult to see that the side length of $S^*_n$ is less than or equal to $3\sqrt{a^2 +1}$ and hence
$\left[3\sqrt{2}\sqrt{a^2 +1}\right] + 1$ provides an upper estimate for $N_0$.) Then \eqref{locAhlfors*} implies
\begin{eqnarray*}
\mu(S^*_n) &\le& \sum_{j = n - N_0}^{n + N_0}\mu\left({\overline{S_j}}\right)\\ &=&
\mu\left(\overline{S_{n-N_0}}\right) + ... + \mu\left(\overline{S_{n-1}}\right) + \mu\left(\overline{S_n}\right) +
\mu\left(\overline{S_{n + 1}}\right) + ... + \mu\left(\overline{S_{n + N_0}}\right)\\
&\le& \left(\frac{1}{c^{N_0}_2} + ... + \frac{1}{c_2}\right)\mu\left(\overline{S_n}\right) +
\mu\left(\overline{S_n}\right) + \left(\frac{1}{c_2} + ... + \frac{1}{c^{N_0}_2}\right)\mu\left(\overline{S_n}\right)\\
&=&\left(2\left(\frac{1}{c_2} + ... + \frac{1}{c^{N_0}_2}\right)+ 1\right)\mu\left(\overline{S_n}\right)\\ &=&\kappa_0\mu\left(\overline{S_n}\right)\,,
\end{eqnarray*}
where $$\kappa_0 := 2\left(\frac{1}{c_2} + ... + \frac{1}{c^{N_0}_2}\right) + 1\,.$$

Now it follows from Lemma \ref{measlemma4} that for any $V \in L_{\mathcal{B}}(\overline{S_n}, \mu), \, V \ge 0$
$$
\int_{\overline{S_n}}V(x)|u(x)|^2d\mu(x) \le C_0 m^{-1}\|V\|_{\mathcal{B}, \overline{S_n}, \mu} \|u\|^2_{W^1_2(S_n)}
$$for all $u \in W^1_2(S_n) \cap C(\overline{S_n})$ satisfying the $m_0$ orthogonality conditions in Lemma \ref{measlemma4},
where the constant $C_0$ is independent of $V$, $m$, and $n$.
Hence \eqref{W1} implies
\begin{equation}\label{W2}
\int_{\overline{S_n}}V(x)|u(x)|^2d\mu(x) \le C_{3}m^{-1} \|V\|_{\mathcal{B}, \overline{S_n}, \mu}
\left(\mathcal{E}_{S_n}[u] - \lambda_1\int_{S_n}|u(x)|^2dx\right)
\end{equation} for all $u \in W^1_2(S_n) \cap C(\overline{S_n}), \, u \perp u_1$ satisfying the $m_0$ orthogonality conditions, where
$$
C_{3}:=  C_0 \left(\frac{1}{\lambda_2 - \lambda_1} + C_2\right).
$$
Let
\begin{eqnarray}\label{S_n}
&& \mathcal{E}_{2,2\mu, S_n}[u] :=
\mathcal{E}_{S_n}[u] - \lambda_1\int_{S_n}| u(x)|^2\,dx - 2\int_{\overline{S_n}}V(x)|u(x)|^2\,d\mu(x),\nonumber\\
&& \textrm{Dom} (\mathcal{E}_{2, 2\mu, S_n}) =  (I - P)W^1_2(S_n)\cap L^2\left(\overline{S_n}, V d\mu\right)
\end{eqnarray}
(see \eqref{n}).
Taking $ m = \left[2C_3 \|V\|_{\mathcal{B}, \overline{S_n}, \mu}\right] + 1 $ in \eqref{W2}, one has
\begin{equation}\label{radest2}
N_-\left(\mathcal{E}_{2,2\mu, S_n}\right) \le  C_{4}\|V\|_{\mathcal{B}, \overline{S_n}, \mu} + 2,\;\;\;\;\;\forall V \geq 0
\end{equation}
where $C_{4} := 2C_{3}$ (see \cite[Lemma 3.2.14]{ME}).
Again, taking $m = 1$ (and $m_0 = 0$; see Lemma \ref{measlemma4}) in \eqref{W2}, we get
$$
2\int_{\overline{S_n}}V(x)|u(x)|^2\,d\mu(x) \le C_{4}\|V\|_{\mathcal{B}, \overline{S_n}, \mu}\left(\mathcal{E}_{S_n}[u] -
\lambda_1\int_{S_n}|u(x)|^2dx\right)\,,
$$for all $u \in W^1_2(S_n) \cap C(\overline{S_n}), \, u \perp u_1$.  If $\|V\|_{\mathcal{B}, \overline{S_n}, \mu} \le \frac{1}{C_{4}}$, then
$$
N_-\left(\mathcal{E}_{2,2\mu, S_n}\right) = 0\,.
$$ Otherwise, \eqref{radest2} implies
$$
N_-\left(\mathcal{E}_{2,2\mu, S_n}\right)\le C_{5}\|V\|_{\mathcal{B}, \overline{S_n}, \mu}\,,
$$
where $C_{5} := 3 C_{4}$.\\\\ Let $M_n = \|V\|_{\mathcal{B}, \overline{S_n}, \mu}$ (see Section \ref{S3}).
Then for any $c \le \frac{1}{C_{4}}$, the variational principle (see \eqref{variation}) implies
\begin{equation}\label{radest3}
N_-\left(\mathcal{E}_{2,2\mu}\right)\le C_{5}\underset{\{M_n >\; c,\;n\in\mathbb{Z}\}}\sum M_n,\;\;\;\;\forall V \geq 0\,.
\end{equation}
 Thus \eqref{radstrip}, \eqref{radest1} and \eqref{radest3} imply \eqref{rbtheqn}.

\section{Concluding remarks}
\begin{remark}
{\rm Recall that a sequence $\{a_n\}$ belongs to the ``weak $l_1$-space'' (Lorentz space) $l_{1,w}$ if the following quasinorm
\begin{equation}\label{quasi}
\|\{a_n\}\|_{1,w} = \underset{s > 0}\sup\left(s \;\textrm{card}\{n\;:\;|a_n| > s\}\right)
\end{equation} is finite. It is a quasinorm in the sense that it satisfies the weak version of the triangle inequality:
$$
\|\{a_n\} + \{b_n\}\|_{1,w} \le 2\left(\|\{a_n\}\|_{1,w} + \|\{b_n\}\|_{1,w}\right)
$$  (see, e.g., \cite{lf} for more details).}
\end{remark}

\begin{theorem}\label{sthm1}{\rm(cf. \cite[Theorem 9.2]{Eugene})}
Let $V \ge 0$. If $N_-\left(\mathcal{E}_{\lambda_1,\gamma \mu, S}\right) = O(\gamma)\mbox{ as } \gamma \longrightarrow +\infty$, then $\|\mathcal{F}_n\|_{1, w} < \infty$.
\end{theorem}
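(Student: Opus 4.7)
The plan is to reduce the problem to a one-dimensional Schrödinger operator by restricting the quadratic form to the subspace $\mathcal{H}_1$, and then invoke the known one-dimensional converse estimate from \cite{Eugene}. The key observation is that the Morse index is monotone under restriction to a subspace, so
\begin{equation*}
N_-\bigl(\mathcal{E}_{\lambda_1, \gamma\mu, S}\bigr) \;\ge\; N_-\bigl(\mathcal{E}_{\lambda_1, \gamma\mu, S}\big|_{\mathcal{H}_1}\bigr),
\end{equation*}
and the right-hand side can be computed explicitly on test functions of the form $u(x) = w(x_1) u_1(x_2)$.

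Concretely, I would repeat the calculation leading to \eqref{stripH1}, replacing the factor $2$ by the general coupling $\gamma$. Using $\|u_1\|_{L^2(0,a)} = 1$ gives
\begin{equation*}
\mathcal{E}_{\lambda_1, \gamma\mu, S}[u] \;=\; \int_{\mathbb{R}} |w'(x_1)|^2 \, dx_1 \;-\; \gamma\int_{\mathbb{R}} |w(x_1)|^2 \, d\nu(x_1),
\end{equation*}
where $\nu(I) := \int_I \int_0^a V(x)|u_1(x_2)|^2 \, d\mu(x)$ as in the proof of Theorem \ref{rbthm}. This is the quadratic form of the one-dimensional Schrödinger operator $-d^2/dx_1^2 - \gamma\nu$ on $L^2(\mathbb{R})$, so its Morse index $N_-(\mathcal{E}_{1, \gamma\nu})$ is a lower bound for $N_-(\mathcal{E}_{\lambda_1, \gamma\mu, S})$. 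Under the hypothesis, $N_-(\mathcal{E}_{1, \gamma\nu}) = O(\gamma)$ as $\gamma \to +\infty$, and I would then appeal directly to the one-dimensional converse \cite[Theorem 9.2]{Eugene}: linear growth in the coupling constant of the counting function forces the dyadic weighted sequence $\{F_n\}$, with $F_n := \int_{I_n} |x_1| \, d\nu$ for $n \ne 0$ and $F_0 := \int_{I_0} d\nu$, to lie in $\ell_{1,w}$. By Fubini, $F_n = \mathcal{F}_n$ for every $n \in \mathbb{Z}$, which yields the conclusion.

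The main obstacle I anticipate is verifying that \cite[Theorem 9.2]{Eugene} applies verbatim when the one-dimensional ``potential'' is the $\sigma$-finite Radon measure $\nu$ rather than an integrable function. Since $u_1$ is bounded on $[0,a]$ and $\mu$ is locally Ahlfors regular on $S$, the measure $\nu$ is locally finite on $\mathbb{R}$, and the test-function/dyadic-interval constructions used in \cite{Eugene} transfer without essential change. Everything else — the monotonicity of $N_-$ under restriction, the repetition of \eqref{stripH1} with general coupling $\gamma$, and the identity $F_n = \mathcal{F}_n$ — is routine.
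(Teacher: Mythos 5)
Your proposal is correct and follows essentially the same route as the paper: the paper's proof works on $\mathcal{H}_1$ by taking test functions $v_n(x)=w_n(x_1)u_1(x_2)$ with $w_n$ the standard piecewise-linear dyadic bump, computes $\mathcal{E}_{\lambda_1,\mu,S}[v_n]= 5\cdot 2^n-\int_{\overline{S}} V|v_n|^2\,d\mu\le 2^n\left(5-\mathcal{F}_n\right)$, and uses disjointness of supports for $|n-k|\ge 3$ to get $N_-\ge \frac{1}{3}\,\textrm{card}\{n:\mathcal{F}_n>5\}$, which is precisely the one-dimensional dyadic argument of \cite[Theorems 9.1--9.2]{Eugene} written out for the measure $\nu$. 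The only difference is that the paper reproduces that test-function computation explicitly (necessary because the cited result is stated for function potentials rather than measures) instead of invoking it as a black box --- exactly the transfer step you flagged and correctly argued goes through unchanged.
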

\begin{proof}
Consider the function
$$
w_n(x_1) := \left\{\begin{array}{cl}
  0 ,   & \  x_1 \le 2^{n-2}\,\textrm{ or }\, x_1\ge 2^{n+1} , \\ \\
  4(x_1 - 2 ^{n-2}) ,  & \ 2^{n-2} <  x_1 < 2^{n - 1}  , \\ \\
 2^n , & \ 2^{n - 1} \le x_1 \le  2^{n}, \\ \\
 2^{n+1} - x_1  ,  & \ 2^{n} <  x_1 < 2^{n+1}\,,
\end{array}\right.
$$
$n > 0$. Let $v_n(x) = w_n(x_1)u_1(x_2)$.
Then by a computation similar to the one leading to \eqref{stripH1} we get
\begin{eqnarray*}
\mathcal{E}_S[v_n] &-& \lambda_1\int_{S}|v_n(x)|^2\,dx =\int_S\left(|\nabla v_n(x)|^2 - \lambda_1|v_n(x)|^2\right)dx\\ &-& \alpha \int_{\mathbb{R}}|v_n(x_1, 0)|^2dx_1 + \beta \int_{\mathbb{R}}|v_n(x_1, a)|^2dx_1 = \int_{\mathbb{R}}|w'_n(x_1)|^2dx_1\\&=&
\int_{2^{n-2}}^{2^{n - 1}}|w'_n(x_1)|^2dx_1 + \int_{2^{n - 1}}^{2^{n}}|w'_n(x_1)|^2dx_1+\int_{2^{n}}^{2^{n+1}}|w'_n(x_1)|^2dx_1\\ &=&
4\cdot 2^n + 0 + 2^n = 5\cdot 2^n.
\end{eqnarray*}
Now
\begin{eqnarray*}
\int_{\overline{S}} V(x)|v_n(x)|^2\,d\mu(x) &\ge& \int_{2^{n - 1}}^{2^{n}}\int_0^a V(x) 2^{2n} |u_1(x_2)|^2\,d\mu(x)\\ &\ge&
2^n\int_{2^{n - 1}}^{2^{n}}\int_0^a |x_1| V(x) |u_1(x_2)|^2\,d\mu(x)\\&=& 2^n \mathcal{F}_n\,.
\end{eqnarray*}
It follows from the above that $\mathcal{E}_{\lambda_1, \mu, S}[v_n] < 0$ if $\mathcal{F}_n > 5$, $n > 0$.

One can  define functions $v_n$ for $n \le 0$
similarly to the above and extend to them the previous estimate.
The fact that $v_n$ and $v_k$ have disjoint supports if $|m - k| \ge 3$ implies that
$$
N_-\left(\mathcal{E}_{\lambda_1, V\mu, S}\right) \ge \frac{1}{3}\textrm{card}\{n\in\mathbb{Z}\,:\, \mathcal{F}_n > 5\}
$$ (see \cite[Theorem 9.1]{Eugene}). If $N_-\left(\mathcal{E}_{\lambda_1, \gamma \mu, S}\right) \le C\gamma$, then
$$
\frac{1}{3}\textrm{card}\{n\in\mathbb{Z}\,:\, \gamma\mathcal{F}_n > 5\} \le C\gamma\,,
$$which implies
$$
\textrm{card}\left\{n\in\mathbb{Z}\,:\, \mathcal{F}_n > \frac{5}{\gamma}\right\} \le 3 C\gamma\,.
$$ With $s = \frac{5}{\gamma}$, we have
$$
\textrm{card}\{n\in\mathbb{Z}\,:\, \mathcal{F}_n > s\} \le C_{7}s^{-1},\,\,\,s > 0,
$$ where $C_{7}:= 15\, C$.
\end{proof}

\begin{remark}
{\rm Suppose that $\mu = |\cdot|$, the  Lebesgue measure. Then
\begin{eqnarray*}
 \mathcal{F}_n &=&   \int_{I_n}|x_1|\left(\int_0^aV(x)|u_1(x_2)|^2dx_2\right)dx_1,  \;\;\;\; n\neq 0, \\
 \mathcal{F}_0 &=&  \int_{I_0}\left(\int_0^aV(x)|u_1(x_2)|^2dx_2\right)dx_1.
 \end{eqnarray*} Let $J_n := (n, n + 1),\;\;I := (0, a)$ and write $\parallel V\parallel_{\mathcal{B}, S_n}$ and $N_-\left(\mathcal{E}_{\lambda_1, S}\right)$ instead of $\parallel V\parallel_{\mathcal{B}, S_n, |\cdot|}$ and $N_-\left(\mathcal{E}_{\lambda_1,|\cdot|, S}\right)$ respectively. Further, let
 $$
 \mathcal{D}_n := \int_{J_n}\parallel V\parallel_{\mathcal{B}, I}\, dx_1\,.
 $$  Then, using \cite[Lemma 7.6]{Eugene} in place of our Lemma \ref{measlemma4},
 one gets
 \begin{equation}\label{radest4}
N_-\left(\mathcal{E}_{\lambda_1, S}\right)\le 1 + 7.61 \underset{\{\mathcal{F}_n > \;0.046,\;n\in\mathbb{Z}\}}\sum \sqrt{\mathcal{F}_n} +  C_{8}\underset{\{\mathcal{D}_n >\; c,\;n\in\mathbb{Z}\}}\sum \mathcal{D}_n,\;\;\;\;\forall V \geq 0\,.
\end{equation}
This estimate is stronger than \eqref{rbtheqn}. Indeed, suppose that $\|V\|_{(\mathcal{B}, S_n)}  = 1$. Since $\mathcal{B}(V)$ satisfies the $\Delta_2$-condition, then $\int_{S_n}\mathcal{B}(V(x))\,dx = 1$ (see (9.21) in \cite{KR}). Using \eqref{Luxemburgequiv} and \eqref{LuxNormPre}, we have
\begin{eqnarray}\label{imp}
\mathcal{D}_n = \int_{J_n}\|V\|_{\mathcal{B}, I}\,dx_1 &\le& 2\int_{J_n}\|V\|_{(\mathcal{B}, I)}\,dx_1 \nonumber\\ &\le& 2\int_n^{n + 1}\left(1 +\int_0^a\mathcal{B}\left(V(x)\right)dx_2\right)dx_1\nonumber\\
&=& 2 + 2\int_{J_n}\int_0^a\mathcal{B}\left(V(x)\right)dx = 4 \nonumber\\ &=& 4\|V\|_{(\mathcal{B}, S_n)} \le 4\|V\|_{\mathcal{B}, S_n}\nonumber\\ &=& 4M_n .
\end{eqnarray}
The scaling $V \longmapsto t V,\; t > 0$, allows one to extend the above inequality to an arbitrary $V \geq 0$.

 By the same procedure as the one leading to (56) in \cite{KM},  one has the following estimate
\begin{equation}\label{est14}
N_-(\mathcal{E}_{\lambda_1, S})\leq 1 + C_{9}\left( \parallel(\mathcal{F}_n)_{n\in\mathbb{Z}}\parallel_{1,w} + \|V_{*}\|_{L_1\left(\mathbb{R}, L_{\mathcal{B}}(I)\right)}\right), \;\;\;\ \forall V \geq 0,
\end{equation}
where
\begin{eqnarray*}
V_{*} &:=& V(x) - G(x_1),\\
G(x_1) &:=& \int_0^aV(x)|u_1(x_2)|^2\,dx_2\,,\\
\|V_{*}\|_{L_1\left(\mathbb{R}, L_{\mathcal{B}}(I)\right)} &:=& \int_{\mathbb{R}}\|V_*\|_{\mathcal{B}, I}\,dx_1\,.
\end{eqnarray*}
Estimates \eqref{radest4} and \eqref{est14} are equivalent to each other but the advantage of the latter is that it separates the contribution to the eigenvalues of $V(x) = V(x_1) $ from that of $V(x) = V(x_2)$.
The condition $\|(\mathcal{F}_n)\|_{{1, w}} < \infty$ is necessary and sufficient for the semi-classical behaviour of the estimate coming from the subspace $\mathcal{H}_1$ (see Theorem \ref{sthm1} above).  In addition, if $V_* \in L_1\left(\mathbb{R}, L_{\mathcal{B}}(I)\right)$, then one gets an analogue of \cite[Theorem 1.1]{LapSolo}, i.e.,
$$
N_-(\mathcal{E}_{\lambda_1, \gamma \mu, S}) = O(\gamma) \;\;\textrm{as}\;\;\gamma \longrightarrow\; +\infty
$$ if and only if $\mathcal{F}_n \in l_{1, w}$.
}
\end{remark}
\begin{remark}\label{dir}
{\rm One can think of the Dirichlet boundary conditions as the limit of the boundary conditions in \eqref{R} as
$\alpha$ and $\beta$ tend to infinity. In this case,
$$\lambda_1 = \frac{\pi^2}{a^2},\,\,\, u_1(x_2) = \sin\frac{\pi}{a}x_2 , \mbox{ and }
\lambda_2 = \min\left\{4\frac{\pi^2}{a^2}, \frac{\pi^2}{a^2} + \pi^2\right\} > \lambda_1.$$
Let $X_n := \{u\in W^1_2(S_n)\,:\, u(x_1, 0) = u(x_1, a) = 0\}$. Then for all $u\in X_n, u\perp \sin\frac{\pi}{a}x_2$, one has an analogue of  \eqref{Rbeqn1}
\begin{equation}\label{Dir1}
\int_{S_n}|u(x)|^2\,dx \le \frac{1}{\pi^2} \max\left\{\frac{a^2}{3}\, , 1\right\}\left(\int_{S_n}|\nabla u(x)|^2\,dx - \frac{\pi^2}{a^2}\int_{S_n}|u(x)|^2\,dx\right).
\end{equation}
Also, similarly to Lemma \ref{cor1}, there is a constant $C > 0$ such that
\begin{equation}\label{Dir2}
\int_{S_n}|\nabla u(x)|^2\,dx \le C\left(\int_{S_n}|\nabla u(x)|^2dx - \frac{\pi^2}{a^2}\int_{S_n}|u(x)|^2\,dx\right), \;\forall u\in X_n,\;u\perp \sin\frac{\pi}{a}x_2 .
\end{equation}
Now, for all $u \in X := \{u\in W^1_2(S)\,:\, u(x_1, 0) = u(x_1, a) = 0\}$, let
$$
Pu(x_1, t) := \left(\frac{2}{a}\int^a_0u(x_1, t)\sin\frac{\pi}{a}t\,dt\right)\sin\frac{\pi}{a}x_2\,.
$$
Then $P : X\longrightarrow X$ an orthogonal projection (cf. Lemma \ref{rblemma2*}).
Let $X_1 : = PX$ and $X_2 := (I - P)X$. Furthermore, let
\begin{eqnarray*}
q_{\mu, S}[u] &:=& \int_S\left|\nabla u(x)\right|^2dx - \frac{\pi^2}{a^2}\int_S|u(x)|^2dx
 - \int_{\overline{S}} V(x)|u(x)|^2\,d\mu(x),\\
\textrm{Dom}(q_{\mu, S}) &=& X\cap L^2\left(\overline{S}, Vd\mu\right).
\end{eqnarray*}
Then similarly to \eqref{radstrip}, we have
\begin{equation}\label{Dir3}
N_-\left(q_{\mu, S}\right) \le N_-(q_{1, 2\mu}) + N_-(q_{2, 2\mu})\,
\end{equation}
where $q_{1, 2\mu}$ and $q_{2, 2\mu}$ are the restrictions of the form $q_{2\mu, S}$ to the subspaces $X_1$ and $X_2$ respectively.
For an arbitrary interval $I$ on $\mathbb{R}$, let
$$
\nu(I) := \frac{2}{a}\int_I\int^a_0V(x)\sin^2\frac{\pi}{a}x_2\,d\mu(x).
$$
Then on the subspace $X_1$, a procedure similar to the one leading to \eqref{radest1} gives an estimate for the first term in \eqref{Dir3}, where in this case $\mathcal{F}_n$ is given by
\begin{eqnarray*}
\mathcal{F}_n &=& \frac{2}{a}\int_{I_n}\int_0^a|x_1|V(x)\sin^2\frac{\pi}{a}x_2\,d\mu(x),\,\,\, n\not= 0,\\
\mathcal{F}_0 &=& \frac{2}{a}\int_{I_0}\int_0^a|V(x)\sin^2\frac{\pi}{a}x_2\,d\mu(x).
\end{eqnarray*}
On the subspace $X_2$, it follows from \eqref{Dir1} and \eqref{Dir2} that there is a constant $C' > 0$ such that
$$
\|u\|^2_{X_n} \le C' \left(\int_{S_n}\left|\nabla u(x)\right|^2dx - \frac{\pi^2}{a^2}\int_{S_n} |u(x)|^2dx\right),\,\,\,\,
\forall u\in X_n,\,u \perp\sin\frac{\pi}{a}x_2 .
$$
Thus, we obtain, similarly  to \eqref{radest3}, an estimate for the second term in \eqref{Dir3}.
}
\end{remark}
\section{Acknowledgments}
The author acknowledges the funding from the Commonwealth Scholarship Commission in the UK, grant UGCA-2013-138. The author is also very grateful to Eugene Shargorodsky for his valuable guidance and discussions during the preparation of this work.

\end{document}